\newcommand{\esca}[1]{\langle {#1} \rangle}
\newcommand{\doble}[1]{\ll \hspace{-0.1cm} {#1} \hspace{-0.1cm} \gg}
\newcommand{\curvo}[1]{\prec \hspace{-0.1cm} {#1} \hspace{-0.1cm} \succ}
\newcommand{\mini}[1]{\llbracket {#1} \rrbracket}
\def\Ccal{\mathcal{C}}
\def\Ncal{\mathcal{N}}
\def\Hcal{\mathcal{H}}
\def\Acal{\mathcal{A}}
\def\Fcal{\mathcal{F}}
\def\c{\mathbb{C}}
\def\R{\mathbb{R}}\def\r{\mathbb{R}}
\def\Z{\mathbb{Z}}\def\z{\mathbb{Z}}
\def\n{\mathbb{N}}
\def\s{\mathbb{S}}
\def\k{\mathbb{K}}
\def\hgot{\mathfrak{h}}
\def\Zgot{\mathfrak{Z}}
\def\Ygot{\mathfrak{Y}}
\def\ngot{\mathfrak{n}}
\def\vgot{\mathfrak{v}}
\newtheorem{theorem}{Theorem}[section]
\newtheorem{proposition}[theorem]{Proposition}
\newtheorem{claim}[theorem]{Claim}
\newtheorem{lemma}[theorem]{Lemma}
\newtheorem{corollary}[theorem]{Corollary}
\newtheorem{remark}[theorem]{Remark}
\newtheorem{definition}[theorem]{Definition}
\theoremstyle{definition}
\numberwithin{equation}{section}
\numberwithin{figure}{section}
\begin{document}

\thispagestyle{empty}

\vspace*{1cm}
\noindent{\bf\LARGE Properness of associated minimal surfaces}

\vspace*{0.5cm}
\noindent{\large\bf Antonio Alarc\'{o}n$\;$ and$\;$ Francisco J. L\'{o}pez}

\footnote[0]{\vspace*{-0.4cm}

\noindent A. Alarc\'{o}n

\noindent Departamento de Geometr\'{\i}a y Topolog\'{\i}a, Universidad de Granada, E-18071 Granada, Spain

\noindent e-mail: {\tt alarcon@ugr.es}

\vspace*{0.1cm}

\noindent F.J. L\'{o}pez

\noindent Departamento de Geometr\'{\i}a y Topolog\'{\i}a, Universidad de Granada, E-18071 Granada, Spain

\noindent e-mail: {\tt fjlopez@ugr.es}

\vspace*{0.1cm}

\noindent Both authors are partially supported by MCYT-FEDER research projects MTM2007-61775 and MTM2011-22547, and Junta de Andaluc\'{i}a Grant P09-FQM-5088}

\vspace*{1cm}

{\small
\noindent {\bf Abstract}\hspace*{0.1cm} 
For any open Riemann surface $\Ncal$ and finite subset
$\Zgot\subset \s^1=\{z\in\c\,|\;|z|=1\},$ there exist an infinite closed set $\Zgot_\Ncal \subset \s^1$ containing $\Zgot$ and a null holomorphic curve
$F=(F_j)_{j=1,2,3}:\Ncal\to\c^3$ such that the map 
\[
\Ygot:\Zgot_\Ncal\times\Ncal \to \r^2,\quad\Ygot(\vgot,P)={\rm Re}\big(\vgot (F_1,F_2)(P)\big),
\]
is proper.

In particular, ${\rm Re}\big(\vgot F\big):\Ncal \to\r^3$  is a proper conformal minimal immersion properly projecting into $\r^2\equiv \r^2\times\{0\}\subset \r^3,$ for all $\vgot \in \Zgot_\Ncal.$

\vspace*{0.1cm}

\noindent{\bf Keywords}\hspace*{0.1cm} Null holomorphic curves,
associated minimal surfaces.

\vspace*{0.1cm}

\noindent{\bf Mathematics Subject Classification (2010)}\hspace*{0.1cm} 53A10; 32H02, 53C42.
}


\section{Introduction}\label{sec:intro}


Given an open Riemann surface $\Ncal,$ a conformal minimal immersion
$X:\Ncal\to\r^3$ is said to be flux-vanishing if the
conjugate immersion $X^*:\Ncal \to \r^3$ is well defined, or
equivalently, if $X$ is the real part of a {\em null holomorphic curve}
$F:\Ncal\to\c^3$ (see Definition \ref{def:null}). In this case, the family of isometric associated
minimal immersions $X_\vgot\equiv{\rm Re}(\vgot F):\Ncal\to\r^3,$ $\vgot\in\s^1=\{z \in \c\,|\; |z|=1\},$
is well defined. Notice that $X=X_1$ and recall that $X^*=X_{-\imath},$ $\imath=\sqrt{-1}.$

The aim of this paper is to study the interplay between
topological properness and associated minimal surfaces. Not so many years ago, it was a general thought that
properness strongly influences the underlying conformal structure
of minimal surfaces in $\r^3.$ In this line, Schoen and Yau asked whether there exist hyperbolic minimal surfaces in $\r^3$ properly projecting into $\r^2\equiv \r^2\times\{0\}\subset \r^3$ \cite[p. 18]{s-y}. A complete answer to this question can be found in \cite{al1}, where examples with arbitrary conformal structure
and flux map are shown. 

On the other hand, any flux-vanishing minimal surface all whose associated surfaces {\em uniformly} properly project into $\r^2$ is parabolic, see Proposition \ref{pro:sharp}. This suggests a correlation between  properness of associated surfaces and  conformal structure of minimal surfaces. The following questions arise:
{\em
\begin{enumerate}[{\sf ({Q}1)}] 
\item[{\sf ({Q}1)}] Do there exist hyperbolic flux-vanishing minimal surfaces $S$ such that both $S$ and its conjugate surface $S^*$ properly project into $\r^2$? 

\item[{\sf ({Q}2)}] More generally, how many associated surfaces of a hyperbolic flux-vanishing minimal surface can properly project into $\r^2$?
\end{enumerate}
}

Motivated by the above questions, this paper deals with those 
subsets $\Zgot \subset \s^1$ allowing proper projections in a uniform way, accordingly to the following

\begin{definition}\label{def:proj}
A closed subset $\Zgot \subset \s^1$ is said to be a projector set for an open Riemann surface $\Ncal$ if there exists a null holomorphic curve $F=(F_j)_{j=1,2,3}:\Ncal \to \c^3$ such that
the map
\[
\Ygot:\Zgot\times\Ncal \to \r^2,\quad\Ygot(\vgot,P)={\rm Re}\big(\vgot (F_1,F_2)(P)\big),
\]
is proper.

Moreover, $\Zgot$ is said to be a universal projector set if it is a projector set for any open Riemann surface.
\end{definition}

If $\Zgot$ is a projector set for $\Ncal$ and $F$ is as in Definition \ref{def:proj}, then ${\rm Re}(\vgot F):\Ncal\to\r^3$ is a proper conformal minimal immersion in $\r^3$ which properly projects into $\r^2,$ for all $\vgot\in\Zgot.$ 

One can easily check that if $\Zgot\subset\s^1$ is a projector set for $\Ncal,$ then so are $\vgot\Zgot$ for all $\vgot\in\s^1,$ $\Zgot\cup(-\Zgot),$ and any closed subset of $\Zgot.$ 

Pirola's results \cite{pirola} imply that $\s^1$ is a projector set for any parabolic Riemann surface of finite topology (see also \cite{Lo}). 
On the other hand, $\{1\}$ is a universal projector set \cite{al1}, whereas Proposition \ref{pro:sharp} in this paper shows that $\s^1$ is not. In this line we have obtained the following
\begin{quote}
{\bf Main Theorem.} {\em For any finite subset $\Zgot\subset\s^1$ and any open Riemann surface $\Ncal,$ there exists an infinite projector set $\Zgot_\Ncal$ for $\Ncal$ containing $\Zgot.$ 

In particular, $\Zgot$ is a universal projector set.}
\end{quote}

As a corollary, for any open Riemann surface $\Ncal$ and finite set $\Zgot\subset\s^1,$ 
there exist an infinite subset $\Zgot_\Ncal \subset\s^1$ containing $\Zgot$ and a flux-vanishing conformal minimal immersion
$X:\Ncal\to\r^3$ such that $X_\vgot$ properly projects into $\r^2$ for all $\vgot\in\Zgot_\Ncal.$ This particularly answers {\sf (Q1)} in the positive and enlightens about {\sf (Q2)}.

It is not hard to check that if $\s^1$ is a projector set for an open Riemann surface $\Ncal,$ then $\Ncal$ is parabolic (see Proposition \ref{pro:sharp}). Furthermore, if $\Ncal$ is of finite topology and $F:\Ncal \to \c^3$ is a null holomorphic curve such that the map $\Ygot:\s^1\times \Ncal\to\r^2,$ $\Ygot(\vgot,P)={\rm Re}\big(\vgot (F_1,F_2)(P)\big),$ is proper, then $F$ has finite total curvature (see Corollary \ref{co:sharp}). Connecting with a classical Sullivan's conjecture for properly immersed minimal surfaces in $\r^3,$ see \cite{Mo}, the following question remains open:
{\em
\begin{enumerate}[\sf ({Q}1)]
\item[{\sf (Q3)}] Let $\Ncal$ be an open Riemann surface of finite topology, and assume there exists a null holomorphic curve $F:\Ncal\to\c^3$ such that the map
\[
\s^1\times\Ncal \to \r^3,\quad(\vgot,P)\mapsto {\rm Re}\big(\vgot F(P)\big),
\]
is proper. Must $\Ncal$ be of parabolic conformal type? Even more, must $F$ be of finite total curvature?
\end{enumerate}
}


Our main tools come from approximation results for minimal surfaces and null holomorphic curves developed by the authors in \cite{al1,al2}.


\section{Preliminaries}

Denote by $\|\cdot\|$ the Euclidean norm in
$\k^n,$ where $\k=\r$ or $\c.$ For any compact topological
space $K$ and continuous map $f:K \to \k^n,$ denote by
$$\|f\|_{0,K}=\max\{\|f(p)\|\,|\, p \in K\}$$ the maximum norm of $f$ on $K.$

Given an $n$-dimensional topological manifold $M,$ we denote by $\partial M$  the $(n-1)$-dimensional topological manifold determined by its boundary points. For any  $A \subset M,$ $A^\circ$ and $\overline{A}$ will denote the interior and the closure of $A$ in $M,$ respectively.  Open connected subsets of $M-\partial M$ will be called {\em domains}, and those proper topological subspaces of $M$ being $n$-dimensional manifolds with boundary are said to be  {\em regions}. If $M$ is a topological surface,  $M$ is said to be {\em open} if it is non-compact and $\partial M =\emptyset.$

\subsection{Riemann surfaces}

\begin{remark}\label{rem:Nfija}
Throughout this paper $\Ncal$ will denote a fixed but arbitrary open Riemann surface, and $\sigma_\Ncal^2$ a conformal Riemannian metric on $\Ncal.$
\end{remark}

The key tool in this paper is a Mergelyan's type approximation result by null holomorphic curves in $\c^3$ (see Lemma \ref{lem:runge} below and \cite{al1,al2}). This subsection and the next one are devoted to introduce the necessary notations for a good understanding of this result. 

A Jordan arc in $\Ncal$ is said to be analytical if it is contained in an open analytical Jordan arc in $\Ncal.$

Classically, a compact region $A\subset\Ncal$ is said to be Runge if $\Ncal-A$ has no relatively compact components in $\Ncal,$ or equivalently, if the inclusion map ${\rm i}_A: A\hookrightarrow \Ncal$ induces a group monomorphism  $({\rm i}_A)_*:\Hcal_1(A,\z) \to \Hcal_1(\Ncal,\z),$ where $\Hcal_1(\cdot,\z)$ means first homology group with integer coefficients. More generally, an arbitrary subset $A\subset\Ncal$ is said to be {\em Runge} if  $({\rm i}_A)_*:\Hcal_1(A,\z) \to \Hcal_1(\Ncal,\z)$ is injective. In this case we identify the groups  $\Hcal_1(A,\z)$ and  $({\rm i}_A)_*(\Hcal_1(A,\z)) \subset \Hcal_1(\Ncal,\z)$  via $({\rm i}_A)_*$ and consider $\Hcal_1(A,\z) \subset \Hcal_1(\Ncal,\z).$ 


Given an open subset $W\subset \Ncal,$ we denote by
\begin{itemize}
\item $\Fcal_\hgot(W)$ the space of holomorphic functions on $W,$ and
\item $\Omega_\hgot(W)$ the space of holomorphic 1-forms on $W.$
\end{itemize}

The following definition is crucial in our arguments, see Figure \ref{fig:admi}.

\begin{definition}
A compact subset $S\subset\Ncal$ is said to be admissible if and only if
\begin{itemize}
\item $M_S:=\overline{S^\circ}$ is a finite collection of pairwise disjoint compact regions in $\Ncal$ with   $\mathcal{ C}^0$ boundary,
\item $C_S:=\overline{S-M_S}$ consists of a finite collection of pairwise disjoint analytical Jordan arcs, 
\item any component $\alpha$ of $C_S$  with an endpoint  $P\in  M_S$ admits an analytical extension $\beta$ in $\Ncal$ such that the unique component of $\beta-\alpha$ with endpoint $P$ lies in $M_S,$ and 
\item $S$ is Runge.
\end{itemize}
\end{definition}
\begin{figure}[ht]
    \begin{center}
    \scalebox{0.17}{\includegraphics{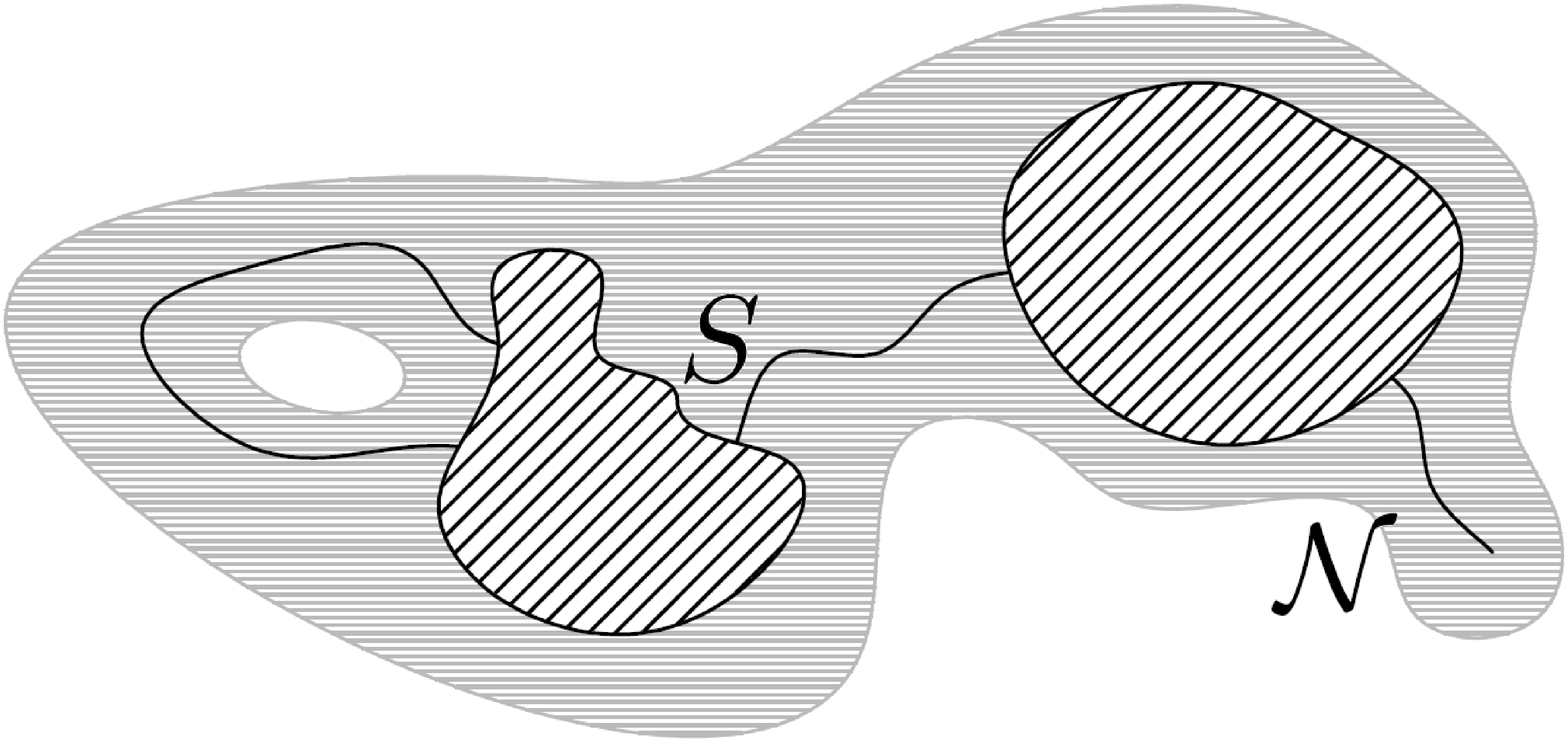}}
        \end{center}
        \vspace{-0.25cm}
\caption{An admissible subset $S$ of $\Ncal$}\label{fig:admi}
\end{figure}



Let $S$ be an admissible subset of $\Ncal.$

A (complex) 1-form $\theta$ on $S$ is said to be of type $(1,0)$ if for any conformal chart $(U,z)$ in $\Ncal,$ $\theta|_{U \cap S}=h(z) dz$ for some function $h:U \cap S \to {\c}.$ An $n$-tuple $\Lambda=(\theta_1,\ldots,\theta_n),$ where $\theta_j$ is a $(1,0)$-type 1-form for all $j,$ is said to be an $n$-dimensional vectorial $(1,0)$-form on $S.$ The space of continuous $n$-dimensional (1,0)-forms on $S$ will be endowed with the $\Ccal^0$ topology induced by the norm 
\begin{equation} \label{eq:norma0}
\|\Lambda\|_{0,S}:=\|\frac{\Lambda}{\sigma_\Ncal}\|_{0,S}=\max_{S} \big\{ \big(\sum_{j=1}^n |\frac{\theta_j}{\sigma_\Ncal}|^2\big)^{1/2}\big\}
\end{equation}
(see Remark \ref{rem:Nfija}).

We denote by  
\begin{itemize}
\item $\Fcal_\hgot(S)$ the space of continuous functions $f:S \to{\c}$ which are holomorphic on an open neighborhood  of $M_S$ in $\Ncal,$ and
\item $\Omega_\hgot(S)$ the space of 1-forms $\theta$ of type $(1,0)$ on $S$ such that $\theta/\vartheta\in \Fcal_\hgot(S)$ for any nowhere-vanishing holomorphic 1-form $\vartheta$ on $\Ncal$ (the existence of such a $\theta$ is well known, see for instance \cite{AFL}). 
\end{itemize}

Smoothness of functions and 1-forms on admissible sets is defined as follows:
\begin{itemize}
\item A function $f \in \Fcal_\hgot(S)$ is said to be smooth if $f|_{M_S}$ admits a smooth extension $f_0$ to a domain $W\subset \Ncal$ containing $M_S,$ and for any component $\alpha$ of $C_S$ and any open analytical Jordan arc $\beta$ in $\Ncal$ containing $\alpha,$  $f$ admits a smooth extension $f_\beta$ to $\beta$
satisfying that $f_\beta|_{W \cap \beta}=f_0|_{W \cap \beta}.$

\item A 1-form $\theta\in \Omega_\hgot(S)$ is said to be smooth if $\theta/\vartheta\in \Fcal_\hgot(S)$ is smooth, for any nowhere-vanishing holomorphic 1-form $\vartheta$ on $\Ncal.$
\end{itemize}

Given a smooth function $f\in \Fcal_\hgot(S),$ the differential $df$ of $f$ is given by 
\[
\text{$df|_{M_S}=d (f|_{M_S})$ and $df|_{\alpha \cap U}=(f \circ \alpha)'(x)dz|_{\alpha \cap U},$}
\]
where $(U,z=x+\imath y),$ $\imath=\sqrt{-1},$ is a conformal chart on $\Ncal$ such that $\alpha \cap U=z^{-1}(\R \cap z(U)).$ Notice that  $df \in \Omega_\hgot(S)$ and is smooth as well.


Finally, the $\mathcal{C}^1$-norm on $S$ of a smooth $f\in \Fcal_\hgot(S)$ is defined by
\[
\|f\|_{1,S}=\max\big\{\|f(P)\|+\|\frac{df}{\sigma_\Ncal}(P)\|\,\big| \;P \in S\big\}.
\]


In a similar way, one can define the notions of smoothness, (vectorial) differential and  $\Ccal^1$-norm for functions $f:S \to \c^k,$ $k \in \n.$


\subsection{Null curves in $\c^3$} \label{sec:null}

Throughout this paper we adopt column notation for both vectors and matrices of linear transformations in $\c^3.$ As usual, $(\cdot)^T$ means transpose matrix. The following operators are strongly related to the
geometry of $\c^3$ and null curves. We denote by
\begin{itemize}
\item $\doble{\cdot,\cdot}:\c^3\times\c^3\to \c,$ $\doble{ u,v }=\bar{u}^T \cdot v,$ the usual Hermitian inner product of $\c^3,$  
\item $\langle\cdot,\cdot\rangle={\rm Re}(\doble{\cdot,\cdot}):\c^3\times\c^3\to\r,$ the Euclidean scalar product of $\c^3\equiv\r^6,$ and 
\item $\curvo{\cdot,\cdot}:\c^3\times\c^3\to\c,$ the complex symmetric bilinear 1-form given by $\curvo{  u, v} = u^T\cdot v.$
\end{itemize}
We also set $\doble{V}^\bot=\{v\in\c^3\,|\, \doble{ u,v}=0 \, \forall u \in V\},$ $\langle V\rangle^\bot=\{v\in\c^3\,|\, \langle u,v \rangle=0 \, \forall u \in V\}$ and  $\curvo{V}^\bot=\{v\in\c^3\,|\,\curvo{  u, v}=0\, \forall u \in V\},$ for any $V\subset \c^3.$ Notice that $\curvo{ \overline{u}}^\bot= \doble{ u}^\bot\subset\esca{u}^\bot$ for all $u\in\c^3,$ and the equality holds if and only if $u=\vec{0}:=(0,0,0)^T.$

A basis $\{u_1,u_2,u_3\}$ of $\c^3$ is said to be {\em $\curvo{\cdot,\cdot}$-conjugate} if $\curvo{ u_j,u_k}=\delta_{jk},$ $j,k\in\{1,2,3\}.$ Likewise, we define the notion of $\curvo{\cdot,\cdot}$-conjugate basis of a complex subspace $U,$ provided that $\curvo{\cdot,\cdot}|_{U \times U}$ is a non-degenerate complex bilinear form.

We denote by $\mathcal{O}(3,\c)$ the complex orthogonal group $\{A\in\mathcal{M}_3(\c)\,|\, A^T A=I_3\},$ that is to say: the group of matrices  whose column vectors determine a $\curvo{\cdot,\cdot}$-conjugate basis of $\c^3.$ We also denote by $A:\c^3 \to \c^3$ the complex linear transformation induced by $A \in \mathcal{O}(3,\c).$ Observe that
\begin{equation}\label{eq:ATheta}
\curvo{Au,Av}=\curvo{u,v} \quad\text{and}\quad \doble{A u,\overline{A} v}=\doble{u,v}, \quad\forall u,v\in\c^3,\; A\in\mathcal{O}(3,\c).
\end{equation}

A vector $u \in \c^3-\{\vec{0}\}$ is said to be null if $\curvo{ u,u}=0.$ We denote by 
\[
\Theta=\{u \in \c^3-\{\vec{0}\} \;|\; u \;\mbox{is null}\}.
\]


Let $M$ be an open Riemann surface. 
\begin{definition}\label{def:null}
A holomorphic map $F:M\to\c^3$ is said to be a null curve if $\curvo{dF,dF}=0$ and $\doble{dF,dF}$ never vanishes on $M.$
\end{definition}
Conversely, given an  exact holomorphic vectorial 1-form $\Phi$ on $M$ satisfying that $\curvo{ \Phi,\Phi}=0$ and $\doble{\Phi,\Phi}$ never vanishes on $M,$ then the map $F:M\to\c^3,$ $F(P)=\int^P \Phi,$ defines a null curve in $\c^3.$ In this case $\Phi=dF$ is said to be the Weierstrass representation of $F.$

A null curve $F:M \to \c^3$ is said to be {\em non-flat} if  $F(M)$ is contained in no null complex straight line. 

%


\begin{definition}
Given a proper subset $M\subset \Ncal,$ we denote by ${\sf N}(M)$ the space of maps $F:M \to\c^3$ extending as a null curve to an open neighborhood of $M$ in $\Ncal.$
\end{definition}

The following definition deals with the notion for null curve on  admissible subsets.

\begin{definition}\label{def:gen-null}
Let $S\subset \Ncal$ be an admissible subset.
A smooth map $F\in\Fcal_\hgot(S)^3$ is said to be a generalized null curve in $\c^3$ if it satisfies the following properties:
\begin{itemize}
\item $F|_{M_S}\in {\sf N}(M_S),$
\item $\curvo{ dF,dF}=0$ and  $\doble{ dF,dF }$ never vanishes on $S.$
\end{itemize}
\end{definition}

If $F$ is a null curve and $A\in\mathcal{O}(3,\c),$ then $A\circ F$ is a null curve as well. The same holds for generalized null curves.

The following Mergelyan's type result for null curves is a key tool in this paper. It will be used to approximate generalized null curves by null curves which are defined on larger domains.

\begin{lemma}[\cite{al1,al2}]\label{lem:runge}
Let $S\subset \Ncal$ be admissible and connected, let $F=(F_j)_{j=1,2,3}\in \Fcal_\hgot(S)^3$ be a generalized null curve in $\c^3,$ and let $W\subset \Ncal$ be a domain of finite topology containing $S$ such that $({\rm i}_S)_*:\mathcal{H}_1(S,\z)\to \mathcal{H}_1(W,\z)$ is an isomorphism, where ${\rm i}_S:S\to W$ denotes the inclusion map. 

Then, for any $\epsilon>0,$ there exists $H=(H_j)_{j=1,2,3}\in{\sf N}(W)$ such that $\|H-F\|_{1,S}<\epsilon.$
Moreover, one can choose $H_3=F_3$ provided that $F_3\in\Fcal_\hgot(W)$ and $dF_3$ never vanishes on $C_S.$ 
\end{lemma}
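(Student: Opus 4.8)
The plan is to reduce the statement to a classical Mergelyan-type approximation for holomorphic functions on admissible subsets, followed by a period-killing argument that restores exactness — and hence integrability to a genuine null curve on $W$ — while preserving the null relation $\curvo{dF,dF}=0$ and the nonvanishing of $\doble{dF,dF}$.

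First I would encode the generalized null curve by Weierstrass-type data. Writing $\Phi=dF=(\phi_1,\phi_2,\phi_3)$ on $S$, the hypotheses say $\phi_1^2+\phi_2^2+\phi_3^2=0$ with $\Phi$ nowhere zero, i.e. $\Phi$ takes values in the quadric cone $\Theta\cup\{\vec 0\}$. Introducing the function $g=(\phi_1+\imath\phi_2)/\phi_3$ (or, to avoid poles, using the pair $\mu=\phi_1-\imath\phi_2$, $\nu=\phi_1+\imath\phi_2$ as free data subject only to $\mu\nu=-\phi_3^2$), the null condition becomes automatic, and one is left to approximate the scalar datum $g$ (resp. $\nu$) together with the $(1,0)$-form $\phi_3$. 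In the ``moreover'' case one freezes $\phi_3=dF_3$ and only approximates $g$; this is legitimate precisely because $F_3\in\Fcal_\hgot(W)$ and $dF_3$ is nonvanishing on $C_S$, so that $g\in\Fcal_\hgot(S)$ restricts to a genuine smooth function along the arcs.

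Second, apply Mergelyan on $S$. A Mergelyan theorem for admissible sets — combining the classical planar Runge/Mergelyan approximation on the compact regions $M_S$ with $\Ccal^1$-approximation along the analytical arcs of $C_S$, the analytical-extension condition in the definition of admissible being exactly what lets one match the germ on $M_S$ with the values along the ``hairs'' — produces a holomorphic $\tilde g$ on $W$ that is $\Ccal^1$-close to $g$ on $S$. The resulting $\tilde\Phi$ is holomorphic on $W$, satisfies $\curvo{\tilde\Phi,\tilde\Phi}=0$ by construction, and, after shrinking the approximation and passing to a slightly smaller domain of finite topology still containing $S$, has $\doble{\tilde\Phi,\tilde\Phi}$ nowhere zero. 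The remaining defect is that $\tilde\Phi$ need not be exact on $W$: it has periods $p_j(\tilde\Phi)=\int_{\gamma_j}\tilde\Phi\in\c^3$ over loops $\gamma_1,\dots,\gamma_k\subset S$ whose classes generate $\mathcal{H}_1(S,\z)\cong\mathcal{H}_1(W,\z)$. Since $F$ is an honest map near $S$, one has $\int_{\gamma_j}\Phi=0$, so these periods are as small as we wish.

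Third, kill the periods. Introduce a holomorphic spray $g_\zeta$, $\zeta=(\zeta_1,\dots,\zeta_m)\in\c^m$ near $0$, with $g_0=\tilde g$, obtained by small holomorphic perturbations supported near the $\gamma_j$ (for instance $g_\zeta=\tilde g\exp(\sum_i\zeta_i h_i)$ with suitable holomorphic $h_i$ on $W$), so that the corresponding null $1$-forms $\Phi_\zeta$ vary holomorphically in $\zeta$, stay close to $\Phi$ on $S$, and the period map $\zeta\mapsto(p_1,\dots,p_k)(\Phi_\zeta)$ has surjective differential at $\zeta=0$; here one uses that the tangent spaces of the quadric cone $\Theta$ span $\c^3$, so that perturbations of the Weierstrass data move each period vector in all directions. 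The implicit function theorem then yields $\zeta$ near $0$ with all periods zero; $\Phi_\zeta$ is then exact and null on $W$, has $\doble{\cdot,\cdot}$ nonvanishing, and is $\Ccal^1$-close to $\Phi$ on $S$. Integrating from a base point gives $H\in{\sf N}(W)$ with $\|H-F\|_{1,S}<\epsilon$, and if $\phi_3$ was kept fixed throughout, $H_3=F_3$. The main obstacle is this last step: one must build the perturbation family so that the period map is a submersion \emph{within} the directions compatible with $\curvo{\Phi_\zeta,\Phi_\zeta}=0$, while keeping every perturbation supported inside $S$ (so as not to spoil the Mergelyan approximation) and small (so that nonvanishing of $\doble{\cdot,\cdot}$ is not destroyed); the arc part of the Mergelyan step is the other technical point, but it is by now standard.
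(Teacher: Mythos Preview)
The paper does not prove this lemma: it is quoted from \cite{al1,al2} and used as a black-box tool throughout, so there is no ``paper's own proof'' to compare against.

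That said, your outline is essentially the strategy carried out in those references: parametrize null $1$-forms by Weierstrass data so that the quadric constraint $\curvo{\Phi,\Phi}=0$ is automatic, apply a scalar Mergelyan theorem on the admissible set $S$ (the arc part of this requires genuine work but is developed in \cite{al1}), and then correct the small period defects via a finite-dimensional holomorphic spray and the implicit function theorem. Your multiplicative perturbation $g_\zeta=\tilde g\,\exp\big(\sum_i\zeta_ih_i\big)$ is exactly the L\'opez--Ros type deformation that keeps $\phi_3$ fixed in the ``moreover'' case.

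Two points deserve more care than your sketch gives them. First, in the general case $\phi_3$ may vanish on $M_S$, so $g$ is only meromorphic there; one must either pass to the spinor-type data $(\mu,\nu)$ you mention and run Mergelyan for those, or reduce to the nonvanishing situation by a preliminary rotation in $\mathcal O(3,\c)$ --- this is not entirely free, since a single rotation need not work globally. Second, and more seriously, the surjectivity of the differential of the period map at $\zeta=0$ is the heart of the argument and is not a consequence of the bare observation that the tangent spaces of $\Theta$ span $\c^3$: one must produce, for each generator $\gamma_j$ of $\mathcal H_1(S,\z)\cong\mathcal H_1(W,\z)$, perturbation functions $h_i$ whose contributions to the periods $\big(\int_{\gamma_1}\Phi_\zeta,\dots,\int_{\gamma_k}\Phi_\zeta\big)$ are linearly independent, and this typically requires building the $h_i$ with controlled behavior localized near individual loops (via Runge again) together with an explicit first-variation computation. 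Your sketch is correct in spirit, but these two steps are where the actual proofs in \cite{al1,al2} do the real work.
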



\section{Main Lemma}

Let us start by introducing some notation.

Let $\Zgot=\{\vgot_1,\ldots,\vgot_\ngot\} \subset \s^1$ with
cardinal number $\ngot\in \n.$
Let $u=(z_1,z_2,z_3)\in\c^3,$ let $\vgot \in \Zgot,$ let $X$ be a topological space, let $K \subset X$ be a compact subset,  and let $F=(F_1,F_2,F_3):X\to \c^3$ be a continuous map. We denote by
\begin{itemize}
\item $u^\vgot={\rm Re}[\vgot (z_1,z_2)]\in \r^2,$
\item $F^\vgot={\rm Re}[\vgot (F_1,F_2)]:X \to \r^2,$ and $F^\Zgot=(F^{\vgot_j})_{j=1,\ldots,\ngot}:X \to \r^{2 \ngot},$
\item $\mini{F^\Zgot}:X \to \r,$ $\mini{F^\Zgot}(P)=\min\{\|F^{\vgot}(P)\|\,|\; \vgot \in \Zgot\},$ and  $\mini{F^\Zgot}_K=\min_K \mini{F^\Zgot}.$
\end{itemize}

The following technical result is the core of our construction. 

\begin{lemma}\label{lem:fun}
Let $M,$ $V$ be two admissible compact regions in $\Ncal$ such that $M\subset V^\circ.$ Let $\Zgot\subset\s^1$  be a finite subset with cardinal number $\ngot,$  and consider $F\in{\sf N}(M)$ and $\delta>0$ such that
\begin{equation}\label{eq:lema}
\mini{F^\Zgot}_{\partial M}>\delta.
\end{equation}

Then, for any $\epsilon>0$ and any $\kappa>\delta,$ there exists $\hat{F}\in{\sf N}(V)$ satisfying
\begin{enumerate}[{\sf ({L}1)}]
\item $\|\hat{F}-F\|_{1,M}<\epsilon,$
\item $\mini{\hat{F}^\Zgot}_{V-M^\circ}>\delta/\ngot,$ and
\item $\mini{\hat{F}^\Zgot}_{\partial V}>\kappa.$
\end{enumerate}
\end{lemma}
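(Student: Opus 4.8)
The plan is to obtain $\hat F$ from $F$ by a finite composition of moves, each of which slightly deforms the null curve on a slightly larger region while pushing the relevant projections outward. The key geometric observation is that for a fixed $\vgot\in\s^1$, the real $2$-plane coordinate $F^\vgot = \mathrm{Re}[\vgot(F_1,F_2)]$ only depends on two of the three $\c^3$-coordinates; by a suitable rotation $A\in\mathcal{O}(3,\c)$ one can arrange that a given coordinate direction is ``invisible'' to one of the projections while still being null, so that adding a large null-direction term in that coordinate increases $\|F^{\vgot'}\|$ for the other $\vgot'\in\Zgot$ without disturbing $\|F^\vgot\|$. This is the standard Jorge--Xavier--type labyrinth idea adapted to the null setting via Lemma~\ref{lem:runge}. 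First I would enumerate $\Zgot=\{\vgot_1,\dots,\vgot_\ngot\}$ and set up a chain of admissible regions $M=:V_0\subset V_1^\circ$, $V_1\subset V_2^\circ$, \dots, $V_{\ngot}\subset V^\circ$ (or simply $V_\ngot=V$), so that crossing from $V_{k-1}$ to $V_k$ is governed by ``taking care of the direction $\vgot_k$''.

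The inductive step is the heart of the matter. Suppose after step $k-1$ we have $F_{k-1}\in{\sf N}(V_{k-1})$ which is $\Ccal^1$-close to $F$ on $M$, has $\mini{F_{k-1}^\Zgot}_{V_{k-1}-M^\circ}>\delta/\ngot$, and satisfies the outward bound $\|F_{k-1}^{\vgot_k}\|>\delta$ — or some running lower bound — on $\partial V_{k-1}$. Working on the admissible set $S$ obtained by attaching to $V_{k-1}$ a spray of analytical arcs reaching out to fill $V_k$ (so that $\mathcal{H}_1$ is unchanged), I would define a generalized null curve that agrees with $F_{k-1}$ on $V_{k-1}$ and, along each arc, grows a large multiple of a fixed null vector $w_k$ chosen so that $\langle\mathrm{Re}(\vgot_k\cdot), \cdot\rangle$-component of $w_k$ in the first two coordinates vanishes but the $\vgot_j$-components ($j\neq k$) do not — equivalently, $w_k^{\vgot_k}=0\in\r^2$ while $w_k^{\vgot_j}\neq0$. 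Adding $\lambda w_k$ times a coordinate function that is controlled on $V_{k-1}$ and large on $\partial V_k$ keeps $\|F^{\vgot_k}\|$ essentially unchanged (so the large value on $\partial V_{k-1}$ persists and we retain $\mini{}>\delta/\ngot$ on the new annulus) but forces $\|F^{\vgot_j}\|$ to be huge on $\partial V_k$ for the other directions. Then Lemma~\ref{lem:runge} approximates this generalized null curve by an honest $F_k\in{\sf N}(V_k)$ with error small enough (measured in $\Ccal^1$ on $V_{k-1}$, and small relative to the large values just produced) that all the strict inequalities survive. One must also be careful that the new annulus $V_k-V_{k-1}^\circ$ gets the bound $\delta/\ngot$: there the projection $\|F^{\vgot_k}\|$ is still bounded below (transition region) and each previously-handled $\|F^{\vgot_i}\|$, $i<k$, was already made large; the factor $1/\ngot$ is exactly the loss one accepts because the labyrinth forces $F^{\vgot_k}$ to dip as it travels before the $w_k$-push takes over, and summing/distributing the budget over the $\ngot$ directions costs that factor.

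After $\ngot$ steps, $\hat F:=F_\ngot\in{\sf N}(V)$: property {\sf (L1)} holds because each step perturbs only in $\Ccal^1$-small amounts on $M$ and we choose the $\epsilon$'s summable to less than $\epsilon$; property {\sf (L2)} holds because on each piece of $V-M^\circ$ the quantity $\mini{\hat F^\Zgot}$ is controlled by the step that created that piece together with the fact that later steps were chosen not to decrease the already-established lower bounds by more than the accumulated small errors, yielding $>\delta/\ngot$ throughout; property {\sf (L3)} is arranged in the final step by choosing $\lambda$ large in every remaining direction so that all $\|\hat F^{\vgot_j}\|>\kappa$ on $\partial V$. The main obstacle I anticipate is the bookkeeping of the competing requirements at the transition annuli: the ``push in direction orthogonal to $\vgot_k$'' move must not undo the gains from earlier steps nor kill the residual lower bound of $F^{\vgot_k}$ itself while it traverses the labyrinth, and getting all these strict inequalities to close simultaneously with a single choice of approximation error at each stage is the delicate part; the role of $\kappa>\delta$ and the $1/\ngot$ loss is precisely to give enough slack for this to work, and the existence of the requisite null vectors $w_k$ with the prescribed vanishing pattern of $w_k^{\vgot}$ should be a short linear-algebra verification using that $\Theta$ spans $\c^3$ and that $u\mapsto u^\vgot$ is a surjection of $\c^3$ onto $\r^2$.
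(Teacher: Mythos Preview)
Your scheme of handling the directions $\vgot_1,\dots,\vgot_\ngot$ one at a time across nested regions $V_0\subset V_1\subset\cdots\subset V_\ngot$ has a genuine gap: at step $k$ your null vector $w_k$ satisfies $w_k^{\vgot_k}=0$, so the push preserves only the $\vgot_k$-projection. On the new annulus $V_k-V_{k-1}^\circ$ the remaining projections $F^{\vgot_j}$ with $j\neq k$ are uncontrolled in the interior: they are large on $\partial V_k$ after the push, but along the way from $\partial V_{k-1}$ to $\partial V_k$ they move along the direction $w_k^{\vgot_j}$ and may well cross the disc $\overline{B(\delta/\ngot)}$. More fundamentally, Lemma~\ref{lem:runge} only approximates the generalized null curve on the admissible set $S=V_{k-1}\cup(\text{arcs})$; on the complement $V_k-S$ you have no control whatsoever, so even $F^{\vgot_k}$ is not bounded below there. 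Your explanation of the $1/\ngot$ loss (``summing over directions'') does not match what is actually needed.

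The paper's mechanism is different in two essential ways. First, the partition is not by $\vgot\in\Zgot$ but by small arcs $\alpha_{i,j}\subset\partial M$: for each such arc one finds, by an elementary pigeonhole/trigonometric argument (Claim~\ref{cla:bolita}), a single line $L_{i,j}\subset\r^2$ such that every one of the $\ngot$ points $F^{\vgot}(\alpha_{i,j})$ can be translated along $L_{i,j}$ while staying outside $\overline{B(\delta/\ngot)}$; this is precisely where the factor $1/\ngot$ enters. Second, the deformation on the disc $\Omega_{i,j}$ bounded by $\alpha_{i,j}$ and a piece of $\partial V$ uses the clause ``$H_3=F_3$'' in Lemma~\ref{lem:runge} after a rotation $A_n\in\mathcal{O}(3,\c)$: fixing the third coordinate globally on $V$ forces $H_n-H_{n-1}\in\doble{e_n}^\bot$ everywhere, and the linear-algebra identity $\doble{e_n}^\bot\subset\{u:\langle u^\vgot,\nu_n\rangle=0\ \forall\vgot\}$ (equation~\eqref{eq:normal}) means the component of \emph{every} $F^\vgot$ normal to $L_{i,j}$ is preserved on all of $\Omega_{i,j}$, not just on the admissible skeleton. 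That global control is what your proposal is missing. You also omit the outer induction on $-\chi(V-M^\circ)$, which is needed when $V-M^\circ$ is not a union of annuli.
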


Roughly speaking, the lemma asserts that a finite family of compact associated minimal surfaces whose boundaries lie outside a cylinder in $\r^3$ can be stretched
near the boundary, in such a way that the boundaries of the new associated surfaces lie outside a larger parallel cylinder. In this process the topology and even the conformal structure of the arising family can be chosen arbitrarily large. See Figure \ref{fig:lemma}.
\begin{figure}[ht]
    \begin{center}
    \scalebox{0.4}{\includegraphics{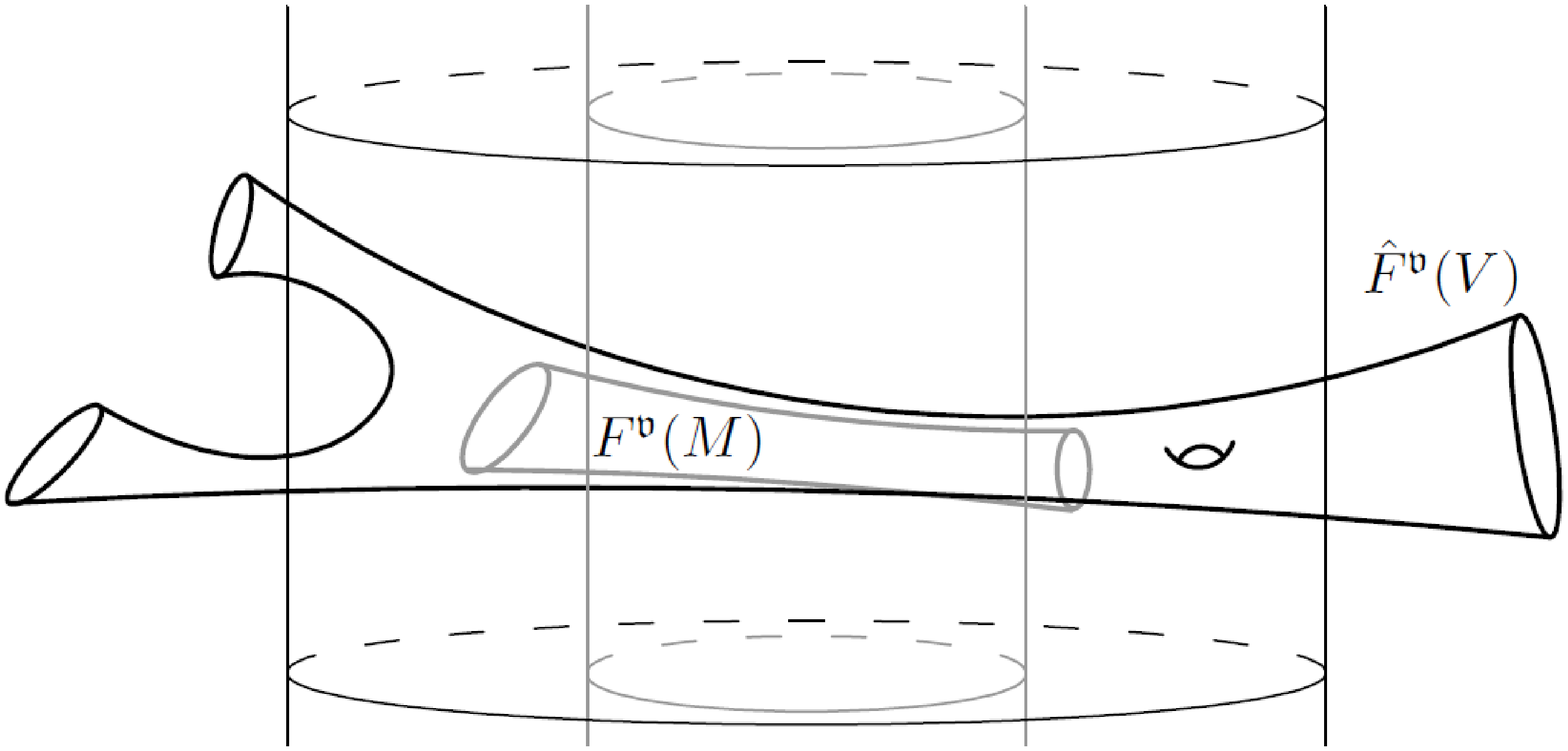}}
        \end{center}
        \vspace{-0.25cm}
\caption{Lemma \ref{lem:fun}}\label{fig:lemma}
\end{figure}

The proof of Lemma \ref{lem:fun} goes by induction on (minus) the Euler characteristic of $V-M^\circ$ (notice that $-\chi(V -M^\circ)\geq 0$). 
The hard part of the proof is the basis of the induction, which roughly goes as follows. 

Firstly we split $\partial M$ into a suitable family of small Jordan arcs $\alpha_{i,j}$ (see properties {\sf (a1)}, {\sf (a2)}, and {\sf (a3)} below), and assign to each of them a complex direction $e_{i,j}$ in $\c^3$ (see \eqref{eq:normal}). The splitting is made so that deformations of $F$ around $\alpha_{i,j}$ preserving the direction $e_{i,j},$ keep the boundaries of all the $\Zgot$-associated minimal surfaces outside the cylinder of radius $\delta/\ngot.$ This choice is possible by basic trigonometry, see Claim \ref{cla:bolita}.

In a second step, we construct an admissible set $S$ by  attaching to $M$ a family of Jordan arcs $r_{i,j}$ connecting $\alpha_{i,j}$ and $\partial V.$ Then, we approximate $F$ on $M$ by a null curve $H \in {\sf N}(V)$ formally satisfying the theses of the lemma on $S,$ see items {\sf (c1)} to {\sf (c4)}. 

Finally, we modify $H$ hardly on $S$ and strongly on $V-S$ in a recursive way to obtain the null curve $\hat{F}\in{\sf N}(V)$ which proves the basis of the induction, see Claim \ref{cla:recur}. This deformation pushes the boundaries of the $\Zgot$-associated surfaces of $H(V)$ outside the cylinder of radius $\kappa.$ Furthermore, this process hardly modifies the  $e_{i,j}$-coordinate of $H$ on the connected component $\Omega_{i,j}$ of $V-S$ with $\alpha_{i,j}\subset\partial\Omega_{i,j},$ see {\sf (f2)}. Therefore, the $\Zgot$-associated surfaces of the arising null curve $\hat{F}(V-M^\circ)$  lie outside the cylinder of radius $\delta/\ngot.$

For the inductive step we reason as follows. If $-\chi(V-M^\circ)=n\in\n,$ we use Lemma \ref{lem:runge} as a bridge principle for null curves to obtain a region $U$ with $M\subset U^\circ\subset U\subset V^\circ$ and $-\chi(V-U^\circ)=n-1,$ and a null curve $H\in{\sf N}(U)$ which approximates $F$ on $M$ and satisfies $\mini{H^\Zgot}_{\partial M}>\delta.$ Then, we finish by applying the induction hyphotesis.

\subsection{Basis of the induction}

Let us show that Lemma \ref{lem:fun} holds in the particular instance $\chi(V-M^\circ)=0.$

Up to slightly deforming $F$ (use Lemma \ref{lem:runge}), we can suppose that $F$ is non-flat.

Since $M \subset V^\circ$ and $V^\circ-M$ has no bounded components in $V^\circ,$ then
 $V-M^\circ=\cup_{i=1}^k \Acal_i,$ where $\Acal_1,\ldots, \Acal_k$ are pairwise disjoint compact annuli. Write $\partial\Acal_i=\alpha_i \cup \beta_i,$ where $\alpha_i\subset \partial M$ and $\beta_i\subset \partial V$  for all $i.$
 
Denote by $B(r)$ the {\em $2$-dimensional}  Euclidean ball $\{p\in \r^2\,|\, \|p\|<r\}$ for any $r>0.$

Label $\Delta=(\r^2-\overline{B(\delta)})^\ngot \subset \r^{2 \ngot},$ and for any $x\in\Delta$ choose a vectorial line $L_x\subset\r^2$  and an open neighborhood $U_x$ of $x$ in $\Delta$ such that
\begin{equation}\label{eq:rectas}
(q_j+L_x)\cap \overline{B(\delta/\ngot)}=\emptyset,\quad \forall (q_1,\ldots,q_\ngot)\in U_x,
\end{equation}
see Figure \ref{fig:rectabuena}.
\begin{figure}[ht]
    \begin{center}
    \scalebox{0.4}{\includegraphics{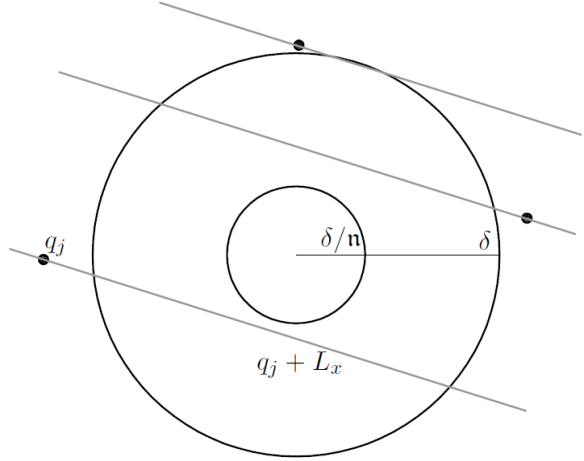}}
        \end{center}
        \vspace{-0.25cm}
\caption{The vectorial line $L_x\subset\r^2$}\label{fig:rectabuena}
\end{figure}
The existence of $L_x$ and $U_x,$ $x \in \Delta,$  follows straightforwardly from the following
\begin{claim}\label{cla:bolita}
For any $x_1,\ldots,x_\ngot \in \r^2-\overline{B(1)},$ there exists a vectorial line $L\subset \r^2$ depending on $x_1,\ldots,x_\ngot,$ such that
$$(x_j+L) \cap \overline{B(1/\ngot)}=\emptyset, \quad j=1,\ldots,\ngot.$$
\end{claim}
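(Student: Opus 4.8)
The plan is to prove the claim by a simple covering/pigeonhole argument on directions. For a vectorial line $L\subset\r^2$, write $L=\r\,u$ for a unit vector $u$, and note that the condition $(x_j+L)\cap\overline{B(1/\ngot)}=\emptyset$ is equivalent to $\dist(0,x_j+L)>1/\ngot$, i.e. the orthogonal distance from the origin to the line through $x_j$ with direction $u$ exceeds $1/\ngot$. If $u^\perp$ denotes a unit normal to $L$, this distance equals $|\langle x_j,u^\perp\rangle|$. So I need a single unit vector $v$ (playing the role of $u^\perp$) with $|\langle x_j,v\rangle|>1/\ngot$ for all $j=1,\ldots,\ngot$ simultaneously; then $L=v^\perp$ works.

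First I would record the elementary geometric fact: for a fixed point $x$ with $\|x\|>1$, the set of unit vectors $v\in\s^1$ for which $|\langle x,v\rangle|\le 1/\ngot$ is contained in a union of two arcs, each centered at a direction orthogonal to $x$, of angular half-width $\arcsin\!\big(\tfrac{1}{\ngot\|x\|}\big)\le\arcsin(1/\ngot)$. Indeed, writing $v$ at angle $\phi$ relative to $x/\|x\|$, one has $\langle x,v\rangle=\|x\|\cos\phi$, so $|\langle x,v\rangle|\le 1/\ngot$ forces $|\cos\phi|\le \tfrac{1}{\ngot\|x\|}\le\tfrac1\ngot$, confining $\phi$ to two arcs about $\pm\pi/2$ of half-width $\arcsin(1/\ngot)$. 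Hence the ``bad'' set of normals for each $x_j$ has total angular measure at most $4\arcsin(1/\ngot)$.

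Then I would conclude by a measure count on the circle $\s^1$, which has total angular measure $2\pi$. The union over $j=1,\ldots,\ngot$ of the bad sets has angular measure at most $4\ngot\arcsin(1/\ngot)$. Since $\arcsin t< \tfrac{\pi}{2}\, t$ is false in general, I instead use the crude bound $\arcsin t\le \tfrac{\pi}{2}t$ for $t\in[0,1]$, giving total bad measure $\le 4\ngot\cdot\tfrac{\pi}{2}\cdot\tfrac1\ngot=2\pi$, which is not quite strict. To get a strict inequality and hence a nonempty complement, I would sharpen one of the estimates: either use that at least one $\|x_j\|>1$ strictly (so its bad arcs are strictly smaller than the worst case), or — cleaner — replace $\overline{B(1)}$ by noting $x_j\notin\overline{B(1)}$ means $\|x_j\|>1$ strictly, so $\tfrac{1}{\ngot\|x_j\|}<\tfrac1\ngot$ strictly and $\arcsin\!\big(\tfrac{1}{\ngot\|x_j\|}\big)<\arcsin(1/\ngot)\le \tfrac{\pi}{2\ngot}$ strictly. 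Summing, the bad set has measure strictly less than $2\pi$, so there exists $v\in\s^1$ avoiding all bad sets; taking $L=v^\perp$ finishes the proof.

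The only mild subtlety — and the single place requiring care — is making the final inequality strict rather than merely non-strict; everything else is routine planar trigonometry. An alternative that avoids even this bookkeeping is to prove the statement by contradiction together with a compactness argument (the set of admissible lines is closed and the hypothesis is open in $x_1,\ldots,x_\ngot$), or to invoke that a continuous probability measure on $\s^1$ cannot be concentrated on $\ngot$ arcs of individually small measure; but the direct measure count above is the shortest route and is what I would write out.
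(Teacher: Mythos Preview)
Your proof is correct and follows essentially the same pigeonhole/covering argument on $\s^1$ as the paper: both choose a direction avoiding $\ngot$ short arcs determined by the $x_j$, and both rest on the equivalent inequalities $\arcsin(1/\ngot)\le \pi/(2\ngot)$ and $\sin(\pi/(2\ngot))\ge 1/\ngot$. The only difference is cosmetic---you parametrize by the unit normal $v$ to $L$ rather than by a unit direction of $L$---and in fact your version is a touch more careful: you explicitly account for both antipodal bad arcs per point and you extract the needed strict inequality from $\|x_j\|>1$, whereas the paper's one-line ``elementary trigonometry'' glosses over these points.
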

\begin{proof}
Label $W_j=\{e^{\imath t} \frac{x_j}{\|x_j\|}\,|\; t \in ]-\frac{\pi}{2\ngot},\frac{\pi}{2 \ngot}[\}$ for all $j=1,\ldots,\ngot,$ and take $x_0 \in \s^1-(\cup_{j=1}^\ngot W_j).$ Setting $L=\{t x_0\,|\; t \in \r\},$ elementary trigonometry gives that $( \frac{x_j}{\|x_j\|}+L) \cap B(\sin(\frac{\pi}{2\ngot}))=\emptyset,$ $j=1,\ldots,\ngot.$ Since $\|x_j\|>1$ for all $j$ and $\sin(\frac{\pi}{2\ngot}) > \frac1{\ngot},$ we are done.
\end{proof}

For each $n\in\n$ denote by $\z_n=\{0,\ldots,n-1\}$ the additive cyclic group of integers modulus $n.$ Since $\mathcal{U}:=\{U_x\,|\, x\in\Delta\}$ is an open covering of the compact set $F^\Zgot(\partial M)\subset\Delta$ (see \eqref{eq:lema}), there exist $m \in \n,$ $m \geq 3,$  and a collection $\{\alpha_{i,j} \,|\, (i,j)\in \{1,\ldots,k\}\times \z_m\}$ such that for each $i \in \{1,\ldots,k\},$

\begin{enumerate}[{\sf ({a}1)}]
\item $\cup_{j=1}^{m} \alpha_{i,j}= \alpha_i,$
\item $\alpha_{i,j}$ and $\alpha_{i,j+1}$ have a common endpoint $Q_{i,j}$ and are otherwise disjoint for all  $j\in \z_m,$ and
\item there exists $U_{i,j}\in \mathcal{U}$ such that $F^\Zgot(\alpha_{i,j}) \subset U_{{i,j}},$  for all  $j\in \z_m.$
\end{enumerate}

If $U_{i,j}=U_{x_{i,j}}$ for $x_{i,j}\in\Delta,$ for simplicity we write $L_{i,j}=L_{x_{i,j}}$ for all $(i,j)\in \{1,\ldots,k\}\times \z_m.$

Let $\{r_{i,j}\,|\;j\in\z_m\}$ be a collection of pairwise
disjoint analytical Jordan arcs in $\Acal_i$ such that $r_{i,j}$ has
initial point $Q_{i,j} \in \alpha_i,$ final point $P_{i,j}\in \beta_i,$ and $r_{i,j}$ is otherwise disjoint from $\partial\Acal_i$ for all $i$ and $j.$ Without loss of generality, assume that $S=M\cup(\cup_{i,j} r_{i,j})$ is admissible. See Figure \ref{fig:anillo}.
\begin{figure}[ht]
    \begin{center}
    \scalebox{0.3}{\includegraphics{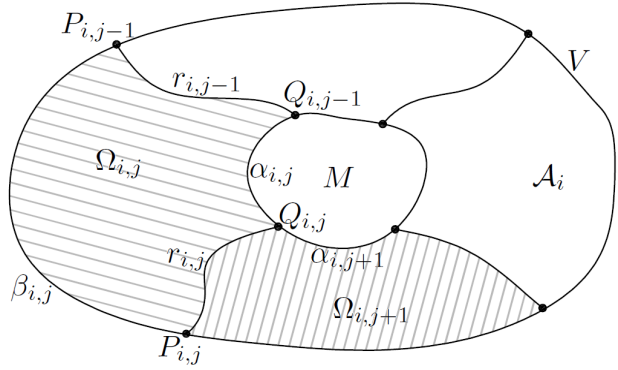}}
        \end{center}
        \vspace{-0.25cm}
\caption{The annulus $\Acal_i\subset V-M^\circ$}\label{fig:anillo}
\end{figure}

The first deformation stage starts with the following

\begin{claim}\label{cla:step1} There exists a generalized null curve $G:S\to\c^3$ such that
\begin{enumerate}[{\sf ({b}1)}]
\item $G|_{M}=F,$
\item $(p+L_{i,h})\cap \overline{B(\delta/\ngot)}=\emptyset,$ $\forall p\in \cup_{\vgot \in \Zgot} G^{\vgot}(r_{i,j}),$ $\forall h\in\{j,j+1\},$ $\forall i,j,$ and
\item $(G^{\vgot}(P_{i,j})+L_{i,h})\cap \overline{B(\kappa)}=\emptyset,$ $\forall \vgot \in \Zgot,$ $\forall h\in\{j,j+1\},$ $\forall i,j.$
\end{enumerate}
\end{claim}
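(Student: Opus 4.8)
The plan is to construct $G$ by extending $F$ across the arcs $r_{i,j}$ using the freedom provided by the null condition. Since $F\in{\sf N}(M)$ is already defined (and non-flat) on a neighborhood of $M$, it remains to prescribe the values of $G$ along each $r_{i,j}$, starting from $G(Q_{i,j})=F(Q_{i,j})$, so that $G$ is a generalized null curve on $S=M\cup(\cup_{i,j}r_{i,j})$ and so that {\sf (b2)} and {\sf (b3)} hold. The point of {\sf (a3)} and \eqref{eq:rectas} is that, since $F^\Zgot(\alpha_{i,j})\subset U_{i,j}$, the endpoint $F^\vgot(Q_{i,j})$ lies in a translate $q_j+L_{i,j}$ of the line $L_{i,j}$ that misses $\overline{B(\delta/\ngot)}$; and the same for $L_{i,j+1}$ using $\alpha_{i,j+1}$. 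So the strategy for {\sf (b2)} is to make the $L_{i,h}$-component of $G^\vgot$ (for $h=j$ and $h=j+1$) stay essentially constant along $r_{i,j}$, while {\sf (b3)} is achieved by driving $G$ very far along $r_{i,j}$ in the direction orthogonal (in $\r^2$) to both $L_{i,j}$ and $L_{i,j+1}$, so that the endpoint $G^\vgot(P_{i,j})$ is pushed outside the huge ball $\overline{B(\kappa)}$ while still lying on the appropriate translated lines.

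Concretely, I would work on each arc $r_{i,j}$ separately. Fix $i,j$. In $\r^2$, since $L_{i,j}$ and $L_{i,j+1}$ are two (possibly equal, possibly distinct) vectorial lines, choose a unit vector $w_{i,j}\in\r^2$; if the lines are distinct this is automatic, and one wants the displacement to preserve the $L_{i,j}$-coordinate and the $L_{i,j+1}$-coordinate simultaneously — if $L_{i,j}\neq L_{i,j+1}$ their only common translate-preserving direction is the zero direction, so instead one should deform in two stages or, more simply, subdivide: refine $m$ so that $\alpha_{i,j}$ and $\alpha_{i,j+1}$ lie in the \emph{same} $U_x$ is not possible in general, so the correct reading is that along $r_{i,j}$ we only need to handle the two lines $L_{i,j},L_{i,j+1}$ and we translate by a vector lying in $L_{i,j}\cap L_{i,j+1}$ as a subset of directions — but if these are distinct lines through the origin their intersection is $\{0\}$. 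The resolution is that {\sf (b3)} is about the single point $P_{i,j}$, so along $r_{i,j}$ I first translate $G^\vgot$ by a large multiple of a direction in $L_{i,j}$ (preserving {\sf (b2)} for $h=j$ trivially, and keeping {\sf (b2)} for $h=j+1$ by keeping the motion small transverse to $L_{i,j+1}$, which requires $L_{i,j}$ not orthogonal to $L_{i,j+1}$ — and by re-refining the cover $\mathcal U$ one can always arrange consecutive lines to be close, hence non-orthogonal). Thus after subdividing $\partial M$ finely enough that consecutive $L_{i,j}$ make a small angle, a single large displacement along $L_{i,j}$ moves $P_{i,j}$ out of $\overline{B(\kappa)}$ along both near-parallel lines.

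To realize such a prescribed $\r^2$-trajectory of $G^\vgot$ for \emph{all} $\vgot\in\Zgot$ simultaneously by a single null curve, I would use the standard description: write $dG=\Phi$ with $\Phi=(\phi_1,\phi_2,\phi_3)$, $\curvo{\Phi,\Phi}=\phi_1^2+\phi_2^2+\phi_3^2=0$, parametrized by $(g,\phi_3)$ with $\phi_1=\tfrac12(1/g-g)\phi_3$, $\phi_2=\tfrac{\imath}{2}(1/g+g)\phi_3$. On an arc $r_{i,j}$ (a one-real-dimensional object extending analytically), prescribing $\mathrm{Re}(\vgot(G_1,G_2))$ along the arc for finitely many $\vgot$ amounts to prescribing finitely many real functions, which is a finite-codimension condition on the pair $(g,\phi_3)$; since $F$ is non-flat, $g$ is non-constant near $Q_{i,j}$, and one has enough freedom to choose $(g,\phi_3)$ along $r_{i,j}$ matching $F$ at $Q_{i,j}$, staying a generalized null curve, and pushing $G^\vgot(r_{i,j})$ along the desired near-linear paths. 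I would package this as: choose a smooth generalized-null extension of $F$ to $S$ first (this is routine given the admissibility of $S$ and non-flatness of $F$), then post-compose, on a neighborhood of each $r_{i,j}$ disjoint from $M$, with a path of translations in the $(z_1,z_2)$-plane of the form $u\mapsto u + c(P)\,v$ absorbed into the real part — more precisely add to $G$ a null curve supported near $r_{i,j}$ whose $(G_1,G_2)$-real-part grows large in the chosen direction; the null condition is maintained by also moving $G_3$ suitably. The bookkeeping that makes {\sf (b2)} survive the large push needed for {\sf (b3)} is the main obstacle: one must quantify how much transverse-to-$L_{i,h}$ error the large longitudinal displacement introduces, and control it using \eqref{eq:rectas} (the neighborhoods $U_x$ have definite size, so a definite transverse margin is available) together with the freedom to first refine $m$. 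Once {\sf (b1)}--{\sf (b3)} hold on the arcs, {\sf (b1)} on $M$ is by construction, and $G$ is a generalized null curve on $S$ by the extension step, completing the proof of the claim.
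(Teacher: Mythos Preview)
Your proposal has a genuine geometric error, and the fix you propose does not work.

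You try to achieve {\sf (b3)} by pushing $G^\vgot$ a long way in a direction lying in $L_{i,j}$. But moving along $L_{i,j}$ leaves the translate $G^\vgot(P)+L_{i,j}$ \emph{unchanged}, so its distance to the origin does not increase at all; {\sf (b3)} for $h=j$ cannot be obtained this way. Your workaround (refine $m$ so consecutive lines $L_{i,j},L_{i,j+1}$ are nearly parallel) is not available: the line $L_x$ produced by Claim~\ref{cla:bolita} does not depend continuously on $x$, so making consecutive arcs $\alpha_{i,j}$ small does not force the associated lines to be close. Even granting near-parallel lines, a displacement along $L_{i,j}$ still does nothing for {\sf (b3)} with $h=j$. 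The later appeal to Weierstrass data and ``finite-codimension conditions'' is too vague to repair this, since the obstruction is not analytic flexibility but the planar geometry you have set up.

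The idea you are missing is radial scaling. For any vectorial line $L\subset\r^2$ and any $q\in\r^2$, one has ${\rm dist}\big(0,\,tq+L\big)=t\cdot{\rm dist}(0,q+L)$ for $t>0$. Hence, starting from $q=F^\vgot(Q_{i,j})$, which by {\sf (a3)} and \eqref{eq:rectas} satisfies ${\rm dist}(0,q+L_{i,h})>\delta/\ngot$ for both $h\in\{j,j+1\}$, the radial path $t\mapsto t\,q$ ($t\ge1$) keeps {\sf (b2)} for \emph{both} values of $h$ and, for $t$ large, forces {\sf (b3)} for both as well. No relation between $L_{i,j}$ and $L_{i,j+1}$ is needed. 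This radial motion in the first two coordinates is realized by an explicit null ray in $\c^3$: writing $F=(F_1,F_2,F_3)$, set
\[
d_{i,j}(t)=\Big(tF_1(Q_{i,j}),\,tF_2(Q_{i,j}),\,F_3(Q_{i,j})+(t-1)\,\imath\sqrt{F_1(Q_{i,j})^2+F_2(Q_{i,j})^2}\Big),\quad t\ge1,
\]
which satisfies $\curvo{d_{i,j}'(t),d_{i,j}'(t)}=0$ and $d_{i,j}(t)^\vgot=t\,F^\vgot(Q_{i,j})$ for every $\vgot\in\s^1$ (not merely $\vgot\in\Zgot$), because multiplying $(F_1,F_2)$ by the real scalar $t$ commutes with ${\rm Re}(\vgot\,\cdot)$. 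One then defines $G|_M=F$, sets $G(r_{i,j})=d_{i,j}([1,t_0])$ for a single large $t_0$, and smooths at the junction points $Q_{i,j}$ to obtain a generalized null curve on $S$. This replaces your entire second and third paragraphs with a one-line construction.
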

\begin{proof}
Write $F=(F_1,F_2,F_3)$ and for each $i,j$ set $$d_{i,j}(t)=\left(t F_1(Q_{i,j}),t F_2(Q_{i,j}),F_3(Q_{i,j})+(t-1) \imath \sqrt{F_1(Q_{i,j})^2+F_2(Q_{i,j})^2}\right), \; t \geq 1.$$ From \eqref{eq:lema} one has $(|F_1|+|F_2|)(Q_{i,j})\neq 0,$ and so $d_{i,j}$ is a real half-line in $\c^3$ satisfying $\curvo{d_{i,j}'(t),d_{i,j}'(t)}=0.$ Moreover, $d_{i,j}(t)^{\vgot}=t F^{\vgot}(Q_{i,j})$ for all $t \geq 1$ and $\vgot \in \Zgot.$
Since $(F^{\vgot}(Q_{i,j})+L_{i,h})\cap\overline{B(\delta/\ngot)}=\emptyset$ (see \eqref{eq:rectas} and (a3)), then the vector $F^{\vgot}(Q_{i,j})$ points to the connected component of  $\r^2-(F^{\vgot}(Q_{i,j})+L_{i,h})$ disjoint from $\overline{B(\delta/\ngot)},$ and so $(d_{i,j}(t)^{\vgot}+L_{i,h})\cap \overline{B(\delta/\ngot)}=\emptyset$ for all $t\geq 1,$ $h\in \{j, j+1\}$ and $\vgot \in \Zgot.$ Furthermore, we can take $t_0>1$ so that $(d_{i,j}(t_0)^{\vgot}+L_{i,h})\cap \overline{B(\kappa)}=\emptyset$ for all $\vgot \in \Zgot$ and $ h\in \{j,j+1\}.$  Up to a slightly smoothing around the points $Q_{i,j}$ for all $i,j,$ it suffices to set $G(r_{i,j})=d_{i,j}([1,t_0])$ for all $i,j$ and $G|_M=F.$
\end{proof}

Then Lemma \ref{lem:runge} applied to $G$ straightforwardly provides a non-flat $H\in{\sf N}(V)$ satisfying that
\begin{enumerate}[{\sf ({c}1)}]
\item $\|H-F\|_{1,M}<\epsilon/(km+1),$
\item $(p+L_{i,h})\cap \overline{B(\delta/\ngot)}=\emptyset,$ $\forall p\in \cup_{\vgot \in \Zgot} H^{\vgot}(r_{i,j}),$ $\forall h\in\{j,j+1\},$ $\forall i,j,$
\item $(H^{\vgot}(P_{i,j})+L_{i,h})\cap \overline{B(\kappa)}=\emptyset,$ $\forall \vgot \in \Zgot,$ $\forall h\in\{j,j+1\},$ $\forall i,j,$ and
\item  $H^\Zgot(\alpha_{i,j})  \subset U_{{i,j}}\in \mathcal{U}$ for all  $i$ and $j.$
\end{enumerate}

Roughly speaking, properties {\sf (c1)}, {\sf (c2)}, and {\sf (c3)} mean that $H$ satisfies {\sf (L1)}, {\sf (L2)}, and {\sf (L3)} just on $S,$ respectively.

Denote by $\Omega_{i,j}$ the closed disc in $\Acal_i$
bounded by $\alpha_{i,j}\cup r_{i,j-1} \cup r_{i,j}$ and a piece,
named $\beta_{i,j},$ of $\beta_i$ connecting $P_{i,j-1}$ and
$P_{i,j}.$ Obviously $\Omega_{i,j} \cap \Omega_{i,j+1}=r_{i,j}$ $\forall i,j,$ and
$\Acal_i=\cup_{j=1}^{m} \Omega_{i,j}$ for all $i.$ See Figure \ref{fig:anillo}.

Let $\eta:\{1,\ldots,k m\}\to\{1,\ldots,k\}\times \z_m$ be the bijection $\eta(n)=(\mathcal{E}(\frac{n-1}{m})+1,n-1),$ where $\mathcal{E}(\cdot)$ means integer part.

The second deformation process is included in the following

\begin{claim}\label{cla:recur}
There exists a sequence $H_0=H,H_1,\ldots,H_{km}$ of non-flat null curves in ${\sf N}(V)$ such that
\begin{enumerate}[{\sf ({d}1{$_n$})}]
\item $\|H_n-H_{n-1}\|_{1,\overline{V-\Omega_{\eta(n)}}}<\epsilon/(km+1),$ $n\geq 1,$

\item $(p+L_h)\cap \overline{B(\delta/\ngot)}=\emptyset,$ $\forall p\in \cup_{\vgot \in \Zgot} {H_n^\vgot}(r_{\eta(a)}),$ $\forall h\in\{\eta(a),\eta(a)+(0,1)\},$ $\forall a\in\{1,\ldots,km\},$

\item $(H_n^{\vgot}(P_{\eta(a)})+L_h)\cap \overline{B(\kappa)}=\emptyset,$ $\forall \vgot \in \Zgot,$ $\forall h\in\{\eta(a),\eta(a)+(0,1)\},$ $\forall a\in\{1,\ldots,km\},$

\item $H_n^\Zgot(\alpha_{\eta(a)})  \subset U_{\eta(a)},$ $\forall a\in\{1,\ldots,km\},$

\item $\mini{H_n^\Zgot}_{\Omega_{\eta(a)}}>\delta/\ngot$ for all $a\in\{1,\ldots, n\},$ $n\geq 1,$ and

\item $\mini{H_n^\Zgot}_{\beta_{\eta(a)}}>\kappa$ for all $a\in\{1,\ldots, n\},$ $n\geq 1.$
\end{enumerate}
\end{claim}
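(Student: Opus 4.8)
The plan is to build the sequence $H_0=H, H_1,\ldots,H_{km}$ one term at a time, where passing from $H_{n-1}$ to $H_n$ deforms the curve only across the single disc $\Omega_{\eta(n)}$, leaving everything fixed (up to a tiny error) on the complement $\overline{V-\Omega_{\eta(n)}}$. So the whole claim reduces to a one-step lemma: given a non-flat null curve $G\in{\sf N}(V)$ that already satisfies the analogues of {\sf (d2$_{n-1}$)}--{\sf (d6$_{n-1}$)} (in particular $G^\Zgot(\alpha_{\eta(n)})\subset U_{\eta(n)}$ and the two segments $r_{\eta(n)-1}$, $r_{\eta(n)}$ on $\partial\Omega_{\eta(n)}$ have their $\Zgot$-associated images outside $\overline{B(\delta/\ngot)}$ relative to the lines $L_{\eta(n)}$), produce a new non-flat null curve $G'\in{\sf N}(V)$, agreeing with $G$ outside $\Omega_{\eta(n)}$ to within $\epsilon/(km+1)$, that in addition satisfies $\mini{(G')^\Zgot}_{\Omega_{\eta(n)}}>\delta/\ngot$ and $\mini{(G')^\Zgot}_{\beta_{\eta(n)}}>\kappa$, while keeping {\sf (d2)}--{\sf (d4)} on the relevant arcs. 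Granting such a one-step lemma, {\sf (d1$_n$)}--{\sf (d6$_n$)} follow by a finite induction on $n$: {\sf (d1$_n$)}, {\sf (d2$_n$)}, {\sf (d3$_n$)}, {\sf (d4$_n$)} hold because off $\Omega_{\eta(n)}$ we barely move, and the data on the arcs $r_{\eta(a)}$, $P_{\eta(a)}$, $\alpha_{\eta(a)}$ for $a\ne n$ is untouched (and for $a=n$ it is preserved by the one-step lemma); {\sf (d5$_n$)}, {\sf (d6$_n$)} hold for $a=n$ by construction and for $a<n$ because the previously-achieved strict inequalities $\mini{H_{a}^\Zgot}_{\Omega_{\eta(a)}}>\delta/\ngot$ and $\mini{H_a^\Zgot}_{\beta_{\eta(a)}}>\kappa$ survive subsequent $\mathcal{C}^0$-small perturbations, the cumulative error being at most $(km)\cdot\epsilon/(km+1)<\epsilon$ and $\delta/\ngot$, $\kappa$ being chosen with a safety margin in the previous steps (formally one fixes slightly larger thresholds and shrinks $\epsilon$ accordingly).

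The engine of the one-step lemma is again Lemma \ref{lem:runge} used as a bridge principle, combined with the trigonometric gadget of Claim \ref{cla:bolita} and the line $L_{\eta(n)}$. Write $\eta(n)=(i,j)$. Inside $\Omega_{i,j}$ we want to pull the images of the arc $\alpha_{i,j}$ and of $\beta_{i,j}$ far out along the direction determined by $L_{i,j}$. The point of having chosen $L_{i,j}=L_{x_{i,j}}$ with property \eqref{eq:rectas} is exactly that translating a point of $U_{i,j}$ by any amount along $L_{i,j}$ keeps it outside $\overline{B(\delta/\ngot)}$; and by \eqref{eq:lema}--type control the $\Zgot$-associated image of $\alpha_{i,j}$ stays inside $U_{i,j}$ throughout (this is the force of {\sf (d4)}). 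Concretely, I would attach to the admissible set an auxiliary arc interior to $\Omega_{i,j}$ running from a point of $\alpha_{i,j}$ to a point of $\beta_{i,j}$, prescribe on it (and near $\beta_{i,j}$) a generalized-null extension whose $\vgot$-components move monotonically along $L_{i,j}$ by a large amount $T$ — using the same null half-line trick $d_{i,j}(t)$ from the proof of Claim \ref{cla:step1}, tilted so that the planar motion is parallel to $L_{i,j}$ and the third coordinate compensates to keep $\curvo{\cdot,\cdot}$ null — and then apply Lemma \ref{lem:runge} on a domain of the same topology to globalize it to $G'\in{\sf N}(V)$, approximating $G$ in $\mathcal{C}^1$ on $\overline{V-\Omega_{i,j}}$ plus the skeleton inside $\Omega_{i,j}$. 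Choosing $T$ large enough forces $\mini{(G')^\Zgot}_{\beta_{i,j}}>\kappa$, and the line property \eqref{eq:rectas} together with $\mathcal{C}^0$-closeness of $(G')^\Zgot$ to the controlled model on all of $\Omega_{i,j}$ gives $\mini{(G')^\Zgot}_{\Omega_{i,j}}>\delta/\ngot$. (Non-flatness is preserved, or restored, by a further application of Lemma \ref{lem:runge}.)

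The main obstacle is the simultaneous control over the whole disc $\Omega_{i,j}$, not just over arcs: Lemma \ref{lem:runge} only guarantees $\mathcal{C}^1$-closeness on the admissible skeleton, so a priori we know nothing about $(G')^\Zgot$ on the interior two-dimensional part of $\Omega_{i,j}$, and we need $\mini{(G')^\Zgot}>\delta/\ngot$ there. The resolution — which is where the geometry of $L_{i,j}$ is really used — is that a single scalar coordinate, namely the component of $G'$ in the direction conjugate to $L_{i,j}$ (call it the $e_{i,j}$-coordinate, cf. the forecast in the outline of the inductive argument, items {\sf (f1)}--{\sf (f2)}), can be held essentially fixed across all of $\Omega_{i,j}$: one arranges the deformation so that the large displacement is entirely along $L_{i,j}$, so the $e_{i,j}$-direction is barely perturbed, hence each $G'^\vgot(P)$ for $P\in\Omega_{i,j}$ differs from a point of $U_{i,j}$ by a translation along $L_{i,j}$, which by \eqref{eq:rectas} lands outside $\overline{B(\delta/\ngot)}$. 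Making "barely perturbed on a 2-dimensional region" precise is the delicate point, and it is presumably handled by keeping that coordinate literally a fixed harmonic/holomorphic function via the last sentence of Lemma \ref{lem:runge} ($H_3=F_3$ after the appropriate rotation $A\in\mathcal{O}(3,\c)$ sending the third axis to the $e_{i,j}$-direction), so that the remaining freedom lives in the two coordinates spanning the $L_{i,j}$-plane and the maximum principle or an explicit model controls them on $\Omega_{i,j}$. Once that scalar-coordinate rigidity is in hand, properties {\sf (d2$_n$)}--{\sf (d6$_n$)} drop out by the elementary planar geometry already packaged in Claim \ref{cla:bolita} and \eqref{eq:rectas}, and the finite induction closes.
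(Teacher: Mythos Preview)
Your strategy is the paper's: induct on $n$, and at step $n$ rotate by some $A_n\in\mathcal{O}(3,\c)$ sending the direction $e_n$ normal to $L_{\eta(n)}$ to the third axis, then invoke the $H_3=F_3$ clause of Lemma~\ref{lem:runge} so that this coordinate is preserved on \emph{all} of $V$; this is exactly the paper's {\sf (f2)}, which yields $(H_n-H_{n-1})^\vgot\in L_{\eta(n)}$ pointwise on $V$, and no maximum principle is used anywhere.

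There is, however, one genuine imprecision in your resolution of ``the main obstacle''. You write that for $P\in\Omega_{\eta(n)}$ the point $G'^\vgot(P)$ ``differs from a point of $U_{\eta(n)}$ by a translation along $L_{\eta(n)}$'' and then invoke \eqref{eq:rectas}. But the fixed-coordinate trick only gives $G'^\vgot(P)\in H_{n-1}^\vgot(P)+L_{\eta(n)}$, and nothing guarantees $H_{n-1}^\Zgot(P)\in U_{\eta(n)}$ for \emph{interior} $P$: property {\sf (d4)} only speaks of $\alpha_{\eta(n)}$. The paper closes this by making the two-dimensional piece of the admissible skeleton inside $\Omega_{\eta(n)}$ a closed disc $K_n$ abutting $\beta_{\eta(n)}$, chosen so large that $\overline{\Omega_{\eta(n)}-K_n}$ is a thin collar of $r_{\eta(n)-(0,1)}\cup\alpha_{\eta(n)}\cup r_{\eta(n)}$ on which the line condition $(H_{n-1}^\vgot(P)+L_{\eta(n)})\cap\overline{B(\delta/\ngot)}=\emptyset$ persists by continuity from {\sf (d2$_{n-1}$)} and {\sf (d4$_{n-1}$)} (this is property (e2), and its analogue (e3) with $\kappa$ on $\overline{\beta_{\eta(n)}-K_n}$). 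On $K_n$ itself one prescribes the rigid translate $H_{n-1}+\mu_n$ for a single null vector $\mu_n\in\Theta\cap\doble{e_n}^\bot$ with $\mu_n^\vgot\in L_{\eta(n)}$ large for every $\vgot\in\Zgot$, which gives $\mini{H_n^\Zgot}_{K_n}>\kappa$ directly (property (f3)). Then {\sf (d5$_n$)} and {\sf (d6$_n$)} for $a=n$ follow by splitting $\Omega_{\eta(n)}$ and $\beta_{\eta(n)}$ into their $K_n$- and non-$K_n$-parts, using (f3) on the former and (e2)/(e3) together with (f2) on the latter. Correspondingly, the bridging arc $\gamma_n$ runs from $\alpha_{\eta(n)}$ to $\partial K_n$ (not to $\beta_{\eta(n)}$), and along it one must produce a generalized null curve interpolating from $H_{n-1}$ to $H_{n-1}+\mu_n$ whose third coordinate is literally $G_{n,3}$ so that the $H_3=F_3$ clause applies; the $d_{i,j}(t)$ half-line does not achieve this, and the paper uses a separate explicit limiting construction on $\gamma_n$ instead.
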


Now, properties {\sf (d1$_n$)}, {\sf (d5$_n$)}, and {\sf (d6$_n$)} imply that $H_n$ formally satisfies {\sf (L1)}, {\sf (L2)}, and {\sf (L3)} on $M\cup(\cup_{a=1}^n \Omega_{\eta(a)}).$ In particular, $H_{km}$ will solve Lemma \ref{lem:fun}.

\begin{proof}[Proof of Claim \ref{cla:recur}]
From {\sf (c2)}, {\sf (c3)}, and {\sf (c4)}, $H_0=H$ satisfies {\sf (d2$_0$)}, {\sf (d3$_0$)}, and {\sf (d4$_0$)}, whereas the remaining properties make no sense for $n=0.$
Reason by induction and assume that we already have $H_0,\ldots, H_{n-1},$ $n\geq 1,$ satisfying the corresponding properties. Let us construct $H_n.$

For any  $\nu=(x,y)\in \r^2-\{(0,0)\},$ it is clear that   $e=(x,x,y,y,0,0)\equiv (1+\imath)(x,y,0)$ is a non-null vector in $\c^3$ and $\doble{e}^\bot =\{u\in\c^3 \,|\, \esca{u^{\vgot},\nu}=0\; \forall \vgot\in \s^1\}$ (here $\esca{\cdot,\cdot}$ denotes the escalar product in $\r^2$).

In particular, if $\nu_n=(x_n,y_n)\in\r^2$ is a unit normal vector to $L_{\eta(n)},$ $e_n:=(1+\imath)(x_n,y_n,0),$ and $w_n:=\overline{e_n}/\curvo{\overline{e_n},\overline{e_n}},$ one has 
\begin{equation}\label{eq:normal}
\curvo{w_n}^\bot=\doble{e_n}^\bot \subset \{u\in\c^3 \,|\, \esca{u^{\vgot},\nu_n}=0\; \forall \vgot\in\Zgot\}.
\end{equation}
(The inclusion in equation \eqref{eq:normal} becomes an equality when $\Zgot\nsubseteq \{\vgot,-\vgot\}$ for all $\vgot \in \s^1.$)

Since $w_n$ is not null, we can take $u_n, v_n\in \curvo{w_n}^\bot$ so that $\{u_n,v_n,w_n\}$ is a $\curvo{\cdot,\cdot}$-conjugate basis of $\c^3.$ Consider the complex orthogonal matrix 
$
A_n=(u_n,v_n,w_n)^{-1},
$ 
 define $G_n:=A_n\circ H_{n-1}\in{\sf N}(V),$ and write $G_n=(G_{n,1},G_{n,2},G_{n,3}).$
Notice that
\begin{equation}\label{eq:An}
A_n(\doble{e_n}^\bot)= \{(z_1,z_2,0) \in \c^3\,|\, z_1,\,z_2 \in \c\}.
\end{equation}

Choose a closed disc $K_n$ in $\Omega_{\eta(n)}-(r_{\eta(n)-(0,1)} \cup \alpha_{\eta(n)}\cup r_{\eta(n)})$ such that
\begin{enumerate}[{\sf ({e}1)}]
\item $K_n\cap \beta_{\eta(n)}$ is a Jordan arc,
\item $(p+L_{\eta(n)})\cap \overline{B(\delta/\ngot)}=\emptyset,$ $\forall p\in \cup_{\vgot \in \Zgot} {H_{n-1}^\vgot}(\overline{\Omega_{\eta(n)}-K_n}),$ and
\item $(p+L_{\eta(n)})\cap \overline{B(\kappa)}=\emptyset,$ $\forall p\in \cup_{\vgot \in \Zgot} {H_{n-1}^\vgot}(\overline{\beta_{\eta(n)}-K_n}).$
\end{enumerate}
This choice can be guaranteed by a continuity argument. 
Property {\sf (e2)} follows from {\sf (d2$_{n-1}$)},  {\sf (d4$_{n-1}$)}, and \eqref{eq:rectas}, whereas {\sf (e3)} follows from {\sf (d3$_{n-1}$)}.

Consider now a Jordan arc $\gamma_n\subset \overline{\Omega_{\eta(n)}-K_n}$ with endpoints $R_n\in \alpha_{\eta(n)}-\{Q_{\eta(n)-(0,1)},Q_{\eta(n)}\}$ and $T_n\in (\partial K_n)-\beta_{\eta(n)},$ and otherwise disjoint from $K_n\cup (\partial \Omega_{\eta(n)})$ (see Figure \ref{fig:omega}).
\begin{figure}[ht]
    \begin{center}
    \scalebox{0.3}{\includegraphics{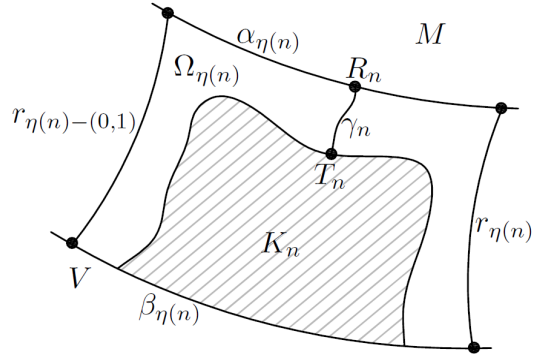}}
        \end{center}
        \vspace{-0.25cm}
\caption{The closed disc $\Omega_{\eta(n)}$}\label{fig:omega}
\end{figure}
Without loss of generality, assume that $K_n$ and $\gamma_n$ are chosen so that the compact set $S_n:=(\overline{V-\Omega_{\eta(n)}})\cup K_n\cup \gamma_n$ is admissible and $(dG_{n,3})|_{\gamma_n}$ never vanishes (recall that $H_{n-1}$ is non-flat and therefore so is $G_n$).

Consider $\vgot_0\in\s^1\subset\c$ such that ${\rm Re}(\vgot\vgot_0)\neq 0$ $\forall \vgot\in\Zgot,$ denote by $\mu=\vgot_0(y_n,-x_n,\imath)\in\c^3,$ and observe that $\mu^\vgot={\rm Re}(\vgot\vgot_0)(y_n,-x_n) \neq (0,0)$ for all $\vgot \in \Zgot.$ Therefore, there exists a large enough $C_n>0$ such that
\begin{equation}\label{eq:fuera}
\mini{(\mu_n+H_{n-1})^\Zgot}_{K_n}>\kappa,
\end{equation}
where $\mu_n=C_n\mu.$ Notice also that $\mu_n\in \Theta\cap\doble{e_n}^\bot.$


Denote by $\zeta=A_n(\mu_n)\in\Theta\cap A_n(\doble{e_n}^\bot).$ Taking into account \eqref{eq:An}, there exists a null vector $\zeta^* \in A_n(\doble{e_n}^\bot)$ so that $\{\zeta,\zeta^*\}$ is a basis of $A_n(\doble{e_n}^\bot)$ and $\curvo{\zeta,\zeta^*} \neq 0.$

Let $\gamma_n(u),$ $u \in [0,1],$ be a smooth parameterization of $\gamma_n$ with $\gamma_n(0)=R_n.$  Label  $\tau_j= \gamma_n([0,1/j])$ and consider the parameterization   $\tau_j(u)=\gamma_n(u/j),$ $u \in [0,1].$ Write $Y_j(u)=G_{n,3}(\tau_j(u)),$  $u \in [0,1],$ and notice that $\frac{d Y_j}{du}(0)=\frac1{j}\frac{d (G_{n,3}\circ \gamma_n)}{du}(0)\neq0 $ for all $j \in \n.$
Set $\zeta_j=\zeta- \frac{(d Y_j/du (0))^2}{2\curvo{\; \zeta,\zeta^*\;}} \zeta^*,$ $j \in \n,$ and observe that $\lim_{j \to \infty} \zeta_j =\zeta$ and  $\curvo{\zeta_j,\zeta_j}=-(\frac{d Y_j}{du}(0))^2\neq 0$ for all $j.$

Set $h_j:[0,1] \to \c^3,$  
\[
h_j(u)=G_n(R_n)+ \imath \frac{Y_j(u)-Y_j(0)}{\curvo{\;\zeta_j,\zeta_j\;}^{1/2}} \zeta_j + (0,0,Y_j(u)-G_{n,3}(R_n)).
\]
Since $\zeta_j\in A_n(\doble{e_n}^\bot),$ then $\curvo{ h_j'(u),h_j'(u) }=0$ and $\doble{ h_j'(u),h_j'(u) }$ never vanishes on $[0,1],$  $j\in \n,$ see \eqref{eq:An}. Up to choosing a suitable branch of $\curvo{ \zeta_j,\zeta_j}^{1/2},$ the sequence  $\{h_j\}_{j \in \n}$ converges uniformly on $[0,1]$ to $h_\infty:[0,1] \to \c^3,$  
\[
h_\infty(u)=u \zeta+ G_n(R_n).
\]

On the other hand $\{\tau_{j}(1)\}_{j\in\n}\to R_n,$ and so $\{h_{j}(1)-G_n(\tau_{j}(1))\}_{j\in\n}\to \zeta=A_n(\mu_n).$ Taking into account \eqref{eq:fuera}, there exists a large enough $j_0\in\n$ such that
\begin{equation}\label{eq:fuera1}
\mini{\left(A_n^{-1}(h_{j_0}(1)-G_n(\tau_{j_0}(1)))+H_{n-1}\right)^\Zgot}_{K_n}>\kappa.
\end{equation}

Set $\hat{h}:\tau_{j_0}([0,1]) \to \c^3,$ $\hat{h}(P)=h_{j_0}(\tau_{j_0}^{-1}(P)).$ Identify $\tau_{j_0}\equiv\tau_{j_0}([0,1]),$ and denote by $\hat{G}_n=(\hat{G}_{n,1},\hat{G}_{n,2},\hat{G}_{n,3}): S_n\to \c^3$ the continuous map given by
\begin{multline}\label{eq:Ggorro}
{\hat{G}_n}|_{\overline{V-\Omega_{\eta(n)}}}={G_n}|_{\overline{V-\Omega_{\eta(n)}}},\; \hat{G}_n|_{\tau_{j_0}}=\hat{h},\\ {\hat{G}_n}|_{(\gamma_n-\tau_{j_0})\cup K_n}={G_n}|_{(\gamma_n-\tau_{j_0})\cup K_n}-G_n(\tau_{j_0}(1))+\hat{h}(\tau_{j_0}(1)).
\end{multline}
Notice that $\hat{G}_{n,3}=(G_{n,3})|_{S_n}.$
The equation $\curvo{d\hat{G}_n,d \hat{G}_n}=0$ formally holds except at the points $R_n$ and $\tau_{j_0}(1)$ where smoothness could fail.
 Up to smooth approximation (only affecting to $\hat{G}_{n,1}$ and $\hat{G}_{n,2}$),  $\hat{G}_n$ is a generalized null curve  satisfying that
\begin{equation}\label{eq:Ggorro1}
{\hat{G}_n}|_{\overline{V-\Omega_{\eta(n)}}}={G_n}|_{\overline{V-\Omega_{\eta(n)}}},\; \hat{G}_{n,3}=G_{n,3}|_{S_n},\; \mini{(A_n^{-1}\circ\,\hat{G}_n)^\Zgot}_{K_n}>\kappa.
\end{equation}
Here we have taken into account \eqref{eq:fuera1}, \eqref{eq:Ggorro}, and $H_{n-1}=A_n^{-1}\circ G_n.$ Applying Lemma \ref{lem:runge} to $\hat{G}_n$ and $S_n$ we can find $Z_n=(Z_{n,1},Z_{n,2},Z_{n,3})\in{\sf N}(V)$ so that
\begin{itemize}
\item $\|Z_n-\hat{G}_n\|_{1,\overline{V-\Omega_{\eta(n)}}}<\epsilon_0,$ where $\epsilon_0>0$ will be specified later,
\item $Z_{n,3}={G}_{n,3}$ on $V,$ and
\item $\mini{(A_n^{-1}\circ Z_n)^\Zgot}_{K_n}>\kappa.$
\end{itemize}

Set $H_n:=A_n^{-1}\circ Z_n\in{\sf N}(V),$ and let us rewrite these properties in terms of $H_n$ and $H_{n-1}$ (recall that $G_n=A_n\circ H_{n-1}$):
\begin{enumerate}[{\sf ({f}1)}]
\item $\|H_n-H_{n-1}\|_{1,\overline{V-\Omega_{\eta(n)}}}<\epsilon_0\cdot \|A_n^{-1}\|$ (see \eqref{eq:Ggorro1}),
\item $\doble{H_n-H_{n-1},e_n}=0$ on $V$ (see \eqref{eq:An}), and
\item $\mini{H_n^\Zgot}_{K_n}>\kappa.$
\end{enumerate}

To finish, let us check that $H_n$ satisfies the required properties provided that $\epsilon_0$ is small enough. Indeed, {\sf (f1)} directly gives {\sf (d1$_n$)} if $\epsilon_0<\frac{\epsilon}{\|A_n^{-1}\|(1+km)}.$ Moreover, {\sf (d2$_n$)} (respectively, {\sf (d3$_n$)}, {\sf (d4$_n$)}) follows from {\sf (f1)} and {\sf (d2$_{n-1}$)} (respectively, {\sf (d3$_{n-1}$)}, {\sf (d4$_{n-1}$)})  for a small enough $\epsilon_0.$

To check {\sf (d5$_n$)} we distinguish two cases. If $a<n$ (and so $n>1$), then we finish by using {\sf (d5$_{n-1}$)} and {\sf (f1)} for a small enough $\epsilon_0.$ In case $a=n$ we argue as follows.  Assume first that $P\in \Omega_{\eta(n)}-K_n.$ Then {\sf (f2)} gives that $H_n(P)-H_{n-1}(P)\in\doble{e_n}^\bot,$ and so, by \eqref{eq:normal}, $\langle H_n^\vgot(P)-H_{n-1}^\vgot(P),\nu_n\rangle=0$ $\forall \vgot \in \Zgot.$ Write
$H_n^\vgot(P)=H_{n-1}^\vgot(P)+(H_n^\vgot(P)-H_{n-1}^\vgot(P)),$ and notice that  $H_{n-1}^\vgot(P)\in H_{n-1}^\vgot(\overline{\Omega_{\eta(n)}-K_n})$ and $H_n^\vgot(P)-H_{n-1}^\vgot(P) \in L_{\eta(n)},$ $\forall\vgot\in\Zgot.$ By {\sf (e2)} we infer that $\mini{H_n^\Zgot}(P)>\delta/\ngot$ and we are done. Assume now that $P\in K_n.$ In this case, {\sf (f3)} directly gives that $\mini{H_n^\Zgot}(P)>\kappa>\delta/\ngot$ as well.

The proof of {\sf (d6$_n$)} is analogous to that of {\sf (d5$_n$)}. In case $a<n,$ we use {\sf (d6$_{n-1}$)} and {\sf (f1)} for small enough $\epsilon_0.$ In case $a=n,$ we argue as in the proof of {\sf (d5$_n$)} but using {\sf (e3)} instead of {\sf (e2)}. In this case we get that $\mini{H_n^\Zgot}(P)>\kappa$ for all $P\in \beta_{\eta(n)}-K_n.$ Finally, {\sf (f3)} shows that $\mini{H_n^\Zgot}(P)>\kappa$ for all $P\in K_n.$

The proof of Claim \ref{cla:recur} is done.
\end{proof}

Set $\hat{F}:=H_{km}\in{\sf N}(V).$ Properties {\sf (c1)} and {\sf (d1$_n$)}, $n=1,\ldots, km,$ imply {\sf (L1)}, whereas properties {\sf (L2)} and {\sf (L3)} directly follow from {\sf (d5$_{km}$)} and {\sf (d6$_{km}$)}, respectively. Therefore, $\hat{F}$ solves the basis of the induction.

\subsection{Inductive step}

Let $n\in\n,$ assume that Lemma \ref{lem:fun} holds when $-\chi(V -M^\circ)< n,$ and let us show that it also holds when
$-\chi(V-M^\circ)=n.$

Recall that $M$ is admissible, and so $\Hcal_1(M,\Z)\subset \Hcal_1(V,\Z).$ Since $-\chi(V-M^\circ)=n>0$, there exists a Jordan curve $\hat{\gamma}\subset V^\circ$ intersecting $V^\circ-M^\circ$ in a Jordan arc $\gamma$ with endpoints $P,Q\in\partial M$ and otherwise disjoint from $\partial M,$ and such that $\hat\gamma\in\Hcal_1(V,\Z)-\Hcal_1(M,\Z).$ Consequently, since $V$ is admissible then $\hat{\gamma}$ can be chosen so that $S:=M\cup\gamma$ is admissible as well.

At this point, we need the following
\begin{claim}\label{cla:topo}
The set $\Sigma=\{u\in \c^2\,|\; \|{\rm Re} (\vgot u)\|>\delta\; \forall \vgot \in \Zgot\}$ is path-connected. 

Furthermore, given $v,$ $w$ in $\Sigma\times\c$ there exists a smooth arc $c:[0,1]\to \Sigma\times\c$ such that $c(0)=v,$ $c(1)=w$ and $c'(t)\in\Theta$ $\forall t\in[0,1].$
\end{claim}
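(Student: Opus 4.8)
The plan is to prove the two assertions of Claim~\ref{cla:topo} separately, starting with the path-connectedness of $\Sigma$ and then promoting a given path to one whose velocity is everywhere null by absorbing the failure of the null condition into the extra $\c$-factor.

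\emph{Path-connectedness of $\Sigma$.} Write $\Zgot=\{\vgot_1,\ldots,\vgot_\ngot\}$ and, for $u\in\c^2$, set $u^{\vgot_j}={\rm Re}(\vgot_j u)\in\r^2$ as in the notation above. Then $\Sigma=\bigcap_{j=1}^\ngot \{u\in\c^2\,|\;\|u^{\vgot_j}\|>\delta\}$. The key observation is that $u\mapsto u^\vgot$ is an $\r$-linear isomorphism $\c^2\equiv\r^4\to\r^4$ for each fixed $\vgot\in\s^1$, so each set $\{u\,|\;\|u^{\vgot_j}\|>\delta\}$ is the preimage under an invertible linear map of the complement of a ball in $\r^4$; since $\r^4-\overline{B(\delta)}$ is simply connected (in particular path-connected), each such set is path-connected, but I need the intersection. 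For this I would argue by a direct homotopy/scaling trick: given $u,v\in\Sigma$, first note that $tu\in\Sigma$ for all $t\geq 1$ since $\|(tu)^{\vgot_j}\|=t\|u^{\vgot_j}\|>\delta$; so it suffices to join points of arbitrarily large norm. Pushing $u$ and $v$ radially far out, both land near infinity, where the relevant obstruction (a fixed compact set of ``bad'' directions coming from finitely many balls) occupies only a proper subcone; concretely, for each $j$ the bad directions of $u^{\vgot_j}$ near infinity are those $u$ with $u^{\vgot_j}$ pointing into $\overline{B(\delta)}$, which after dividing by $\|u\|$ is a small neighbourhood of a great $2$-sphere's worth of directions in $\s^3$, and finitely many such neighbourhoods cannot disconnect $\s^3$ (they have codimension-controlled complement; $\s^3$ minus finitely many ``thin'' sets of positive codimension stays connected). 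Choosing a direction avoiding all bad cones and a large radius, I connect $tu$ and $sv$ through the far region, concatenating with the two radial segments. Alternatively, and perhaps cleanest, I would invoke that $\Sigma$ is the complement in $\c^2\equiv\r^4$ of a finite union of sets each of which is a bounded-thickness ``slab'' $\{|u^{\vgot_j}|\le\delta\}$, i.e.\ the preimage of a compact ball under a surjective linear map with $2$-dimensional kernel, hence each has real codimension $0$ but is contained in... — here one must be careful, $\{\|u^{\vgot_j}\|\le\delta\}$ is $4$-dimensional, so this codimension argument fails and I will stick with the radial-pushing argument, which only uses that $\s^3$ minus finitely many spherical caps of radius $<\pi/2$ about distinct poles is path-connected.

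\emph{Upgrading to null-velocity arcs in $\Sigma\times\c$.} Given $v,w\in\Sigma\times\c$, write $v=(v',v_3)$, $w=(w',w_3)$ with $v',w'\in\Sigma\subset\c^2$. By the first part choose a smooth arc $\gamma:[0,1]\to\Sigma$ with $\gamma(0)=v'$, $\gamma(1)=w'$; I may perturb and reparametrize so that $\gamma$ has isolated (or no) critical points and in particular so that the set where $\curvo{\gamma'(t),\gamma'(t)}_{\c^2}$ (the $\c^2$ symmetric form, i.e.\ $\gamma_1'^2+\gamma_2'^2$) behaves controllably. The idea is to define the third coordinate $c_3(t)$ by solving $c_3'(t)^2=-\big(\gamma_1'(t)^2+\gamma_2'(t)^2\big)$ so that $c(t)=(\gamma(t),c_3(t))$ satisfies $\curvo{c'(t),c'(t)}=0$, i.e.\ $c'(t)\in\Theta\cup\{\vec 0\}$; the issue is choosing a continuous (smooth) square root of the right-hand side and ensuring $c'(t)\neq\vec 0$, and matching the prescribed endpoints $v_3,w_3$ of the third coordinate. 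To handle the square root I would first replace $\gamma$ by a path along which $\gamma_1'^2+\gamma_2'^2$ is, say, a fixed negative constant times a nowhere-vanishing smooth function, or more robustly: pick any smooth nowhere-zero function $g:[0,1]\to\c$ and prescribe $\gamma$ and an auxiliary parametrization so that $\gamma_1'^2+\gamma_2'^2=-g^2$; since we have complete freedom in how $\gamma$ traverses the open connected set $\Sigma$ (we can compose with wild reparametrizations and small loops), this normalization is arrangeable. Then $c_3(t)=v_3+\int_0^t g$; adjusting $g$ by adding an exact correction (or inserting a preliminary segment at $t=0$ where $\gamma$ is constant and $c_3$ moves — but then $\curvo{c',c'}=c_3'^2\ne0$, which violates nullity) — so instead the endpoint value $w_3$ must be achieved by choosing the constant and the ``winding'' of $g$; since $\int_0^1 g$ can be made to equal any prescribed complex number (multiply $g$ by phases and stretch), we can arrange $v_3+\int_0^1 g=w_3$. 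Finally $c'(t)=(\gamma'(t),g(t))$ with $g$ nowhere zero is never $\vec 0$, so $c'(t)\in\Theta$ throughout, and $c$ stays in $\Sigma\times\c$ because its $\c^2$-projection is $\gamma\subset\Sigma$.

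\emph{Main obstacle.} The delicate point is the second part: simultaneously (i) keeping the $\c^2$-projection inside $\Sigma$, (ii) making $\curvo{c',c'}\equiv0$, and (iii) hitting the prescribed third-coordinate endpoints $v_3,w_3$. The resolution is that $\Sigma$ is open and connected, so the path $\gamma$ there is infinitely flexible — we can insert detours, rescale time, and precompose with perturbations without leaving $\Sigma$ — and this flexibility is exactly what lets us normalize $\gamma_1'^2+\gamma_2'^2=-g^2$ for a convenient nowhere-zero $g$ and then tune the total integral $\int_0^1 g$ to land at $w_3$. I expect the write-up to spend most of its effort making this normalization precise (e.g.\ by first reducing to the case $v'=w'$ via the first part and a homotopy, then handling a short explicit model path), while the genuinely new content — that the null constraint costs nothing once one extra free coordinate is available — is elementary.
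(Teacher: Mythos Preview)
Your overall strategy for the first assertion matches the paper's: scale radially (since $tu\in\Sigma$ for $t\ge1$) and then connect far-out points along a large sphere. But your dimensional bookkeeping is wrong, and the error is not cosmetic. The map $u\mapsto u^{\vgot}={\rm Re}(\vgot u)$ sends $\c^2\cong\r^4$ to $\r^2$, not to $\r^4$; it is a surjective $\r$-linear map with $2$-dimensional kernel. Hence the bad set $\{u:\|u^{\vgot_j}\|\le\delta\}$ is a solid cylinder $\overline{B(\delta)}\times\r^2$, and its trace on $\s^3(R)$ for large $R$ is a thin tubular neighbourhood of the great \emph{circle} $H_{\vgot_j}\cap\s^3$, where $H_{\vgot_j}=\{u:{\rm Re}(\vgot_j u)=0\}$. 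This is exactly what the paper uses: $\s^3$ minus finitely many great circles (codimension~$2$) is connected, so for $R$ large the good part of $\s^3(R)$ is connected. Your description (``a great $2$-sphere's worth of directions'', later ``spherical caps'') would be fatal if taken literally, since a single great $2$-sphere already disconnects $\s^3$.

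For the second assertion your route diverges from the paper and carries a gap. The paper's argument is a one-liner: since $\Sigma\times\c$ is open and path-connected and the null cone $\Theta$ real-spans $\c^3$, any two points can be joined by a \emph{polygonal} arc whose segments have null direction; then smooth the corners. Your approach---fix a path $\gamma$ in $\Sigma$ and solve $c_3'^2=-(\gamma_1'^2+\gamma_2'^2)$---runs into the endpoint condition $c_3(1)=w_3$. You cannot ``multiply $g$ by phases and stretch'': $g$ is determined (up to a global sign) by $\gamma$, and reparametrizing $\gamma$ leaves $\int_0^1 g$ invariant. The honest fix is to insert, near some point of $\Sigma$, a there-and-back segment in a null direction $(a,b,c)$ followed by $(-a,-b,c)$ (both null), which moves $c_3$ by $2sc$ while returning in $\c^2$; but once you are concatenating null straight segments you have arrived at the paper's polygonal-arc argument anyway. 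So either correct the endpoint step explicitly along these lines, or adopt the paper's cleaner polygonal approach from the start.
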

\begin{proof} Fix two different points $u_1,$ $u_2 \in \Sigma.$ Notice that  $\ell_u:=\{t u\,|\; t>1\}\subset \Sigma$ for all  $u \in \Sigma.$ 

Denote by $\s^3(R)$ the $3$-dimensional Euclidean sphere of radius $R>0$ in $\r^4\equiv \c^2$  and write $\s^3\equiv \s^3(1).$

For each $\vgot\in \Zgot,$ let $\gamma_\vgot\subset \s^3\equiv\s^3(1)$ denote the spherical geodesic $H_\vgot\cap \s^3,$ where $H_\vgot=\{u \in \c^2\,|\; {\rm Re} (\vgot u)=0\},$ and denote by $\Gamma=\cup_{\vgot\in\Zgot}\gamma_\vgot.$ Notice that $u_i/\|u_i\|\notin \Gamma$ $\forall i=1,2.$ Since $\frac1{R} (\s^3(R)-\Sigma)$ is the tubular neighborhood of $\Gamma$ in $\s^3$ given by $\cup_{\vgot\in\Zgot}\{u \in \s^3\,|\;  \|{\rm Re} (\vgot u)\|\leq\delta/R\},$ one has 
$$\lim_{R \to +\infty} \frac1{R} (\s^3(R)-\Sigma) =\Gamma$$ 
in the topology associated to the Hausdorff distance. Since $\s^3-\Gamma$ is path-connected and contains $u_1/\|u_1\|$ and $u_2/\|u_2\|,$ then these two points lie in the same connected component of $\frac1{R} (\s^3(R)\cap\Sigma)$ for a large enough $R.$ Equivalently,  $\frac{R}{\|u_1\|}u_1$ and $\frac{R}{\|u_2\|}u_2$ lie in the same path-connected component $\Omega$ of $\s^3(R)\cap\Sigma.$ Then, $\ell_{u_1}\cup\Omega\cup \ell_{u_2}$ is path-connected and so is $\Sigma.$

For the second part, since $\Sigma$ is open and path-connected, then there exists a polygonal arc $\hat{c}:[0,1]\to\Sigma\times\c$ connecting $v$ and $w$ and with $\hat{c}'(t)\in\Theta$ at any regular point. To finish, choose $c$ as a suitable smoothing of $\hat{c}.$  
\end{proof}

By Claim \ref{cla:topo} and equation \eqref{eq:lema}, one can construct a generalized null curve $G:S\to\c^3$ satisfying $G|_M=F$ and $\mini{G^\Zgot}_{\gamma}>\delta.$ From Lemma \ref{lem:runge} applied to $G$ and a continuity argument, we obtain a compact region  $U$ and a null curve $H\in{\sf N}(U)$ satisfying that
\begin{enumerate}[{\sf (i)}]
\item $S\subset U^\circ\subset U\subset V^\circ,$
\item $-\chi(V-U^\circ)=n-1,$
\item $\|H-G\|_{1,M}<\epsilon/2,$ and
\item $\mini{H^\Zgot}_{U-M^\circ}>\delta.$
\end{enumerate}

Since $-\chi(V-U^\circ)<n$, the induction hypothesis applied to $H$ and $\epsilon_0=\min\{\epsilon/2,\delta-\delta/\ngot\}$ gives a null curve $\hat{F}\in{\sf N}(V)$ such that
\begin{enumerate}[{\sf (I)}]
\item $\|\hat{F}-H\|_{1,U}<\epsilon_0,$
\item $\mini{\hat{F}^\Zgot}_{V-U^\circ}>\delta/\ngot,$ and
\item $\mini{\hat{F}^\Zgot}_{\partial V}>\kappa.$
\end{enumerate}

Then, {\sf (L1)} follows from {\sf (iii)} and {\sf (I)}. Properties {\sf (iv)}, {\sf (I)}, and {\sf (II)} give {\sf (L2)}. Finally, {\sf (III)} directly implies {\sf (L3)}. Hence, $\hat{F}$ satisfies the conclusion of Lemma \ref{lem:fun} and we are done.


\section{Main results}
Given an admissible compact region $M\subset \Ncal,$  $F\in{\sf N}(M),$ a finite subset $\Zgot\subset\s^1$ and $r>0,$ it is not hard to find $v\in \c^3$ so that the null curve $X=F+v$ satisfies that
$\mini{X^\Zgot}_{\partial M}>r.$ Indeed, it suffices to choose $v \in \c^3$ such that $\|F\|<\|v^\vgot\|-r$ on $M$ for all $\vgot \in \Zgot.$ Therefore, Main Theorem in the introduction follows from the following extension

\begin{theorem}\label{th:main}
Let $M$ be an admissible compact region in $\Ncal,$ let $\Zgot_0\subset\s^1$ be a finite subset with cardinal number $\ngot\in\n,$  and let $X\in{\sf N}(M)$ such that
\begin{equation}\label{eq:teo}
\mini{X^{\Zgot_0}}_{\partial M}>\ngot.
\end{equation}

Then, for any $\varepsilon>0,$ there exist an infinite closed subset $\Zgot_\Ncal\subset\s^1$ and $Y\in {\sf N}(\Ncal)$ such that
\begin{enumerate}[{\sf (A)}]
\item $\Zgot_0\subset \Zgot_\Ncal,$
\item $\|Y-X\|_{1,M}<\varepsilon,$
\item the map $\Ygot:\Zgot_\Ncal \times \Ncal\to\r^2$ given by $\Ygot(\vgot,P)=Y^\vgot(P)$ is proper, and
\item $\mini{Y^{\Zgot_0}}>1- \varepsilon$ on $\Ncal-M.$
\end{enumerate}
\end{theorem}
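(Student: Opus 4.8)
The plan is to construct $Y$ and $\Zgot_\Ncal$ by a standard recursive exhaustion argument, feeding Lemma \ref{lem:fun} at each step. First I would fix a normal exhaustion $M=M_0\subset M_1\subset M_2\subset\cdots$ of $\Ncal$ by admissible compact regions with $M_j\subset M_{j+1}^\circ$ and $\cup_j M_j=\Ncal$, chosen so that each $M_{j+1}-M_j^\circ$ carries finite Euler characteristic (so Lemma \ref{lem:fun} applies). I would also fix a summable sequence $\varepsilon_j>0$ with $\sum_j\varepsilon_j<\varepsilon$ and small enough that the $\Ccal^1$-estimates on $M$ telescope to give {\sf (B)} and {\sf (D)}. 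The initial datum \eqref{eq:teo} reads $\mini{X^{\Zgot_0}}_{\partial M}>\ngot$, i.e.\ with $\delta_0:=\ngot$; the key numerology is that a single application of Lemma \ref{lem:fun} with a finite set of cardinal $\ngot$ degrades the lower bound on the complement by a factor $1/\ngot$ (item {\sf (L2)}) while keeping the boundary bound as large as we like (item {\sf (L3)}).

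The recursion produces finite sets $\Zgot_0\subset\Zgot_1\subset\Zgot_2\subset\cdots$ and null curves $X_j\in{\sf N}(M_j)$ as follows. At step $j\to j+1$, enlarge $\Zgot_j$ to $\Zgot_{j+1}$ by adjoining one new point $\vgot_{j+1}\in\s^1$; since $\Zgot_j$ is finite and $X_j^{\Zgot_j}$ satisfies $\mini{X_j^{\Zgot_j}}_{\partial M_j}>$ (the bound carried from the previous step), by continuity one can pick $\vgot_{j+1}$ close enough to some already-present direction, or adjust $X_j$ slightly via Lemma \ref{lem:runge} and a translation by a vector $v\in\c^3$ as in the paragraph preceding Theorem \ref{th:main}, so that the hypothesis \eqref{eq:lema} of Lemma \ref{lem:fun} holds for the enlarged set $\Zgot_{j+1}$ (cardinal $\ngot_{j+1}$) on $\partial M_j$ with some $\delta_j>0$ that we may take $\ge 1$ — crucially the translation only \emph{increases} all the norms $\|X_j^\vgot\|$ uniformly, so it does not spoil the bounds already achieved on inner regions. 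Then apply Lemma \ref{lem:fun} to the pair $M_j\subset M_{j+1}$, the finite set $\Zgot_{j+1}$, the curve $X_j$ and $\delta=\delta_j$, with $\epsilon=\varepsilon_{j+1}$ and $\kappa=\kappa_{j+1}$ chosen huge (say $\kappa_{j+1}>j+1$), obtaining $X_{j+1}:=\hat F\in{\sf N}(M_{j+1})$ with $\|X_{j+1}-X_j\|_{1,M_j}<\varepsilon_{j+1}$, $\mini{X_{j+1}^{\Zgot_{j+1}}}_{M_{j+1}-M_j^\circ}>\delta_j/\ngot_{j+1}$, and $\mini{X_{j+1}^{\Zgot_{j+1}}}_{\partial M_{j+1}}>\kappa_{j+1}>j+1$. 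The last bound serves as the input $\delta_{j+1}$ for the next step (provided $\kappa_{j+1}>\ngot_{j+2}$, which we arrange), so the hypothesis of the next application is automatic without further translation; meanwhile the complement bound $\delta_j/\ngot_{j+1}\ge 1/\ngot_{j+1}$ stays under control once we keep $\delta_j\ge\ngot_{j+1}$, e.g.\ by forcing $\kappa_{j+1}\ge\ngot_{j+2}$ at every stage. Set $\Zgot_\Ncal:=\overline{\cup_j\Zgot_j}$, an infinite closed subset of $\s^1$ containing $\Zgot_0$, which gives {\sf (A)}.

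Now I would check convergence and the three remaining conclusions. Since $\|X_{j+1}-X_j\|_{1,M_k}<\varepsilon_{j+1}$ for all $j\ge k$ (the $\Ccal^1$-estimate on $M_j$ restricts to $M_k$), the sequence $\{X_j\}$ is $\Ccal^1$-Cauchy uniformly on compacts, hence converges to a holomorphic map $Y:\Ncal\to\c^3$; standard arguments (the limit of null curves with non-degenerate $\doble{dX_j,dX_j}$ is again null, after checking $\doble{dY,dY}$ does not vanish — which holds because the $\Ccal^1$-convergence is uniform on each $M_k$ and the defect is bounded below on $\partial M_k$, or more simply one keeps a uniform lower bound on $M_k$ by taking $\varepsilon_j$ small enough at the $j=k$ stage) show $Y\in{\sf N}(\Ncal)$. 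Estimate {\sf (B)} follows from $\sum\varepsilon_j<\varepsilon$. For {\sf (D)}: on $M_{j+1}-M_j^\circ$ we have $\mini{X_{j+1}^{\Zgot_0}}\ge\mini{X_{j+1}^{\Zgot_{j+1}}}>1$ (as $\Zgot_0\subset\Zgot_{j+1}$ and $\delta_j/\ngot_{j+1}\ge1$), and all later perturbations change $Y^{\Zgot_0}$ on that region by at most $\sum_{i>j+1}\varepsilon_i<\varepsilon$, giving $\mini{Y^{\Zgot_0}}>1-\varepsilon$ on $\Ncal-M$. For properness {\sf (C)}: given $R>0$, pick $j$ with $j> R$ and $\kappa_{j'}>R$ for all $j'\ge j$; for any $P\notin M_j$, $P$ lies in some $M_{j'+1}-M_{j'}^\circ$ with $j'\ge j$, and for every $\vgot\in\Zgot_\Ncal$ one approximates $\vgot$ by $\vgot'\in\Zgot_{j'+1}$, uses $\mini{X_{j'+1}^{\Zgot_{j'+1}}}(P)>\delta_{j'}/\ngot_{j'+1}$ on inner annuli together with the boundary bounds $\kappa>R$ on the $\beta$-pieces — actually the cleanest route is to observe that item {\sf (L2)} is applied with the \emph{fixed} set $\Zgot_{j'+1}$ at its own stage, so directly $\|X_{j'+1}^{\vgot'}(P)\|\ge\mini{X_{j'+1}^{\Zgot_{j'+1}}}(P)$, but to control \emph{all} $\vgot\in\Zgot_\Ncal$ uniformly one must ensure the bound survives adding later directions; this is exactly what the recursion guarantees because once $\vgot'$ has been added at stage $j'+1$ it stays in every later $\Zgot_{j''}$, and the later applications of Lemma \ref{lem:fun} only shrink the complement bound by controlled factors $1/\ngot$ which we absorb into $\kappa_{j''}\to\infty$. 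Thus $\|Y^\vgot(P)\|\to\infty$ as $P\to\infty$ uniformly in $\vgot\in\Zgot_\Ncal$, i.e.\ $\Ygot$ is proper, proving {\sf (C)}. \emph{The main obstacle} is the bookkeeping in the last step: making the lower bound $\mini{Y^{\Zgot_\Ncal}}(P)$ — not just $\mini{Y^{\Zgot_j}}(P)$ for each fixed $j$ — tend to infinity, since each new direction added at a later stage could a priori be a direction in which $Y^\vgot(P)$ is small at a point $P$ deep inside the exhaustion; the resolution is to demand, when choosing $\vgot_{j+1}$, that it satisfy $\mini{X_j^{\Zgot_j\cup\{\vgot_{j+1}\}}}_{M_j-M}>$ (whatever bound the previous stage delivered on $M_j-M$) by taking $\vgot_{j+1}$ sufficiently close to $\Zgot_j$ in $\s^1$ and invoking uniform continuity of $(\vgot,P)\mapsto\|X_j^\vgot(P)\|$ on the compact $\s^1\times M_j$, so that adjoining the new direction never decreases the already-secured bounds, only the fresh Lemma \ref{lem:fun} step does, and that decrease is by a factor we have pre-committed to compensate via $\kappa_{j+1}$.
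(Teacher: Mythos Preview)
Your overall strategy is the paper's: exhaust $\Ncal$, apply Lemma \ref{lem:fun} recursively, and at each step enlarge the finite set by one direction chosen (via continuity on the compact already built) so as not to spoil earlier lower bounds. The convergence and immersion arguments are fine. Two pieces of bookkeeping, however, are genuinely off.

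\emph{Order of operations and {\sf (D)}.} You enlarge to $\Zgot_{j+1}$ \emph{before} invoking Lemma \ref{lem:fun}; the paper does it \emph{after}. This matters at the very first step: from \eqref{eq:teo} you only have $\mini{X_0^{\Zgot_0}}_{\partial M_0}>\ngot$, and adjoining $\vgot_1$ can only lower this minimum, so your $\delta_0\le\ngot$ while the cardinal jumps to $\ngot+1$. Item {\sf (L2)} then gives merely
\[
\mini{X_1^{\Zgot_1}}_{M_1-M_0^\circ}>\frac{\delta_0}{\ngot+1}\le\frac{\ngot}{\ngot+1}<1,
\]
so {\sf (D)} fails on $M_1-M_0^\circ$ for small $\varepsilon$. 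Your proposed fix by translation is not available: a translation by $v\neq 0$ contributes $\|v\|$ to $\|\cdot\|_{1,M}$ and destroys {\sf (B)} (and in any case a translation does not ``only increase'' the norms $\|X^\vgot\|$; it can make them vanish). The paper sidesteps this by applying the lemma at step $n$ with the \emph{old} set $\Zgot_{n-1}$ and the tuned value $\delta=n(\ngot+n-1)$, so that {\sf (L2)} delivers exactly $>n$ on the $n$-th annulus; only \emph{then} is the new direction $\vgot$ chosen near $\Zgot_0$ so that the strict bound $>k$ persists on every annulus $M_k-M_{k-1}^\circ$, $k\le n$, for the enlarged $\Zgot_n$.

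\emph{Growth of $\kappa$ and {\sf (C)}.} Your condition $\kappa_{j+1}\ge\ngot_{j+2}$ feeds $\delta_{j+1}\ge\ngot_{j+2}$ into the next step, yielding annulus bounds $\delta_{j+1}/\ngot_{j+2}\ge 1$: bounded, not divergent. Properness requires $\mini{Y^{\Zgot_\Ncal}}_{M_k-M_{k-1}^\circ}\to\infty$ as $k\to\infty$, so you must force $\kappa_j/\ngot_{j+1}\to\infty$, e.g.\ $\kappa_j\ge(j+1)(\ngot+j)$, which is precisely the paper's choice $\kappa=(n+1)(\ngot+n)$. With these two adjustments (apply the lemma with $\Zgot_{n-1}$, then add $\vgot$; and let $\kappa_n$ grow like $n\cdot\ngot_n$) your argument becomes the paper's.
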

\begin{proof}
Without loss of generality, we assume that $\varepsilon<1.$ 

Let $\{M_n\,|\;n \in \{0\}\cup\n\}$ be an exhaustion  of $\Ncal$ by admissible compact regions with analytical boundary satisfying that $M_0=M$ and $M_{n-1} \subset M_{n}^\circ$ for all $n \in \n.$

Label $X_0=X$ and let us construct a sequence $\{(X_n,\Zgot_n)\}_{n \in \n}$ of null curves and finite subsets satisfying that
\begin{enumerate}[\sf (a$_n$)]
  \item $X_n \in {\sf N}(M_n)$ for all $n \in \n,$
  \item $\Zgot_{n-1}\subset\Zgot_n\subset \s^1$ and the cardinal number of $\Zgot_n$ is $\ngot+n,$ 
  \item $\|X_{n}-X_{n-1}\|_{1,M_{n-1}}< \varepsilon_n$ for all $n \in \n,$ where
  \[
  \varepsilon_n<\frac1{2^{n+1}}\min\big\{\varepsilon\,,\, \min\{\min_{M_k}\|\frac{dX_k}{\sigma_\Ncal}\|\,|\, k=0,\ldots,n-1\}\big\}>0
  \]
  (notice that $dX_k$ never vanishes on $M_k$ since $X_k\in{\sf N}(M_k)$),
  \item $\mini{X_k^{\Zgot_n}}_{M_k-M_{k-1}^\circ}>k$ for all $k\in\{1,\ldots,n\},$ $n\in \n,$ and
  \item $\mini{X_n^{\Zgot_n}}_{\partial M_n}>(n+1)(\ngot+n)$ for all $n\in\n.$
\end{enumerate}

The sequence is obtained in a recursive way. The couple $(X_0,\Zgot_0)$ trivially satisfies {\sf (a$_0$)} and {\sf (e$_0$)}, whereas {\sf (b$_0$)}, {\sf (c$_0$)}, and {\sf (d$_0$)} make no sense. Let $n\geq 1,$ assume we already have a couple $(X_{n-1},\Zgot_{n-1})$ satisfying the corresponding properties, and let us construct $(X_n,\Zgot_n).$ For $\varepsilon_n$ small enough, the null curve $X_n\in{\sf N}(M_n)$ given by Lemma \ref{lem:fun} applied to the data 
\[
(M,V,\Zgot,F,\delta,\epsilon,\kappa)=\big(M_{n-1},M_n,\Zgot_{n-1}, X_{n-1},n(\ngot+n-1),\varepsilon_n,(n+1)(\ngot+n)\big)
\]
satisfies {\sf (a$_n$)}, {\sf (c$_n$)}, 
\begin{equation}\label{eq:nn}
\text{$\mini{X_k^{\Zgot_{n-1}}}_{M_k-M_{k-1}^\circ}>k$ for all $k\in\{1,\ldots,n\},$ and $\mini{X_n^{\Zgot_{n-1}}}_{\partial M_n}>(n+1)(\ngot+n).$}
\end{equation}
For the first assertion in \eqref{eq:nn}, use items {\sf (c$_n$)} and {\sf (d$_{n-1}$)} and a continuity argument for $k\in\{1,\ldots,n-1\},$ and Lemma \ref{lem:fun}-{\sf (L2)} for $k=n.$

To close the induction, choose any $\vgot\in\s^1-\Zgot_{n-1}$ such that  the couple $(X_n,\Zgot_n:=\Zgot_{n-1}\cup\{\vgot\})$ satisfies {\sf (b$_n$)}, {\sf (d$_n$)}, and {\sf (e$_n$)}. Since  $M_k-M_{k-1}^\circ,$ $k=1,\ldots,n,$ and $\partial M_n$ are compact, the existence of such a $\vgot$ near $\Zgot_{0}$ is guaranteed by a continuity argument and \eqref{eq:nn}.
  
By items {\sf (a$_n$)} and {\sf (c$_n$)}, $\{X_n\}_{n\in \n}$ uniformly converges on compact subsets of $\Ncal$ to a
holomorphic map $Y:\Ncal \to \c^3$ with $\curvo{dY,dY}=0.$ Set $\Zgot_\infty:=\cup_{n\in\n}\Zgot_n,$ $\Zgot_\Ncal=\overline{\Zgot}_\infty,$ and let us show that the couple $(Y,\Zgot_\Ncal)$ satisfies the theses of the theorem.

From {\sf (b$_n$)}, $\Zgot_\Ncal$ is a closed infinite set. 
Item {\sf (A)} is obvious. 

To prove that $Y$ is an immersion, hence a null curve, it suffices to check that $\|dY/\sigma_\Ncal\|(P)>0$ $\forall P\in\Ncal.$ Indeed, let $P\in\Ncal$ and choose $j\in\n$ so that $P\in M_j.$ Then {\sf (c$_n$)} implies that 
\begin{eqnarray*}
\|dY/\sigma_\Ncal\|(P) & \geq & \|dX_{j}/\sigma_\Ncal\|(P) - \sum_{k\geq j} \|X_{k+1}-X_k\|_{1,M_k}\\
 & > & \|dX_{j}/\sigma_\Ncal\|(P) - \sum_{k\geq j} \varepsilon_{k+1} \\
 & > & \|dX_{j}/\sigma_\Ncal\|(P) - \sum_{k\geq j} \frac{1}{2^{k+1}} \|dX_{j}/\sigma_\Ncal\|(P) \\
 & \geq & \frac12 \|dX_{j}/\sigma_\Ncal\|(P)>0,
\end{eqnarray*}
hence $Y$ is an immersion as claimed.

From {\sf (c$_n$)} one has 
\begin{equation}\label{eq:cerca}
\text{$\|Y-X_j\|_{1,M_j} < \sum_{k=j+1}^\infty \|X_k-X_{k-1}\|_{1,M_{k-1}}<\sum_{k=j+1}^\infty \varepsilon_k<\varepsilon$ for all $j\geq 0,$}
\end{equation}
proving in particular {\sf (B)}.

To prove {\sf (C)}, take $k\in\n.$ From \eqref{eq:cerca} and {\sf (d$_j$)}, $j\geq k,$ one infers $\mini{Y^{\Zgot_j}}_{M_k-M_{k-1}^\circ}>k-\varepsilon$ for all $j\geq k.$ Therefore, $\Ygot\big(\Zgot_\infty\times (M_k-M_{k-1}^\circ)\big)\cap B(k-\varepsilon)=\emptyset$ for all $k\in\n,$ hence $\Ygot^{-1}(\overline{B(k-2\varepsilon)})\cap(\Zgot_\infty\times\Ncal)\subset \Zgot_\infty\times M_{k-1}$ is relatively compact in $\Zgot_\Ncal\times\Ncal.$ Thus $\Ygot^{-1}(\overline{B(k-2\varepsilon)})$ is compact in $\Zgot_\Ncal\times\Ncal$ for all $k,$ proving {\sf (C)}.

Finally, {\sf (D)} follows from {\sf (b$_n$)}, {\sf (d$_n$)}, and \eqref{eq:cerca}.
\end{proof}

\begin{remark} 
Theorem \ref{th:main} shows that $\Zgot_0$ is a universal projector set and $\Zgot_\Ncal\subset \s^1$ is a projector set for the fixed $\Ncal.$ Since obviously $\Zgot_\Ncal$  depends on $\Ncal,$ one can not infer that it is a universal projector set. 

On the other hand, up to an elementary refinement of the proof of Theorem \ref{th:main}, one can construct $\Zgot_\infty$ to be closed, and even with accumulation set in $\Zgot_0.$ In this case, $\Zgot_\Ncal=\Zgot_\infty$ is countably infinite.
\end{remark}

The following proposition shows that $\s^1$ is a projector set for no hyperbolic Riemann surface. 

\begin{proposition} \label{pro:sharp}
Let $M$ be an open Riemann surface and let $F=(F_1,F_2):M \to \c^2$ be a holomorphic map. Assume that the map $\Psi:\s^1\times M \to \r^2,$ $\Psi (\vgot,P)={\rm Re}(\vgot F),$ is proper.

Then $M$ is parabolic. As a consequence, if $M$ has finite topology then $M$ is biholomorphic to a finitely punctured compact Riemann surface and $F_1$ and $F_2$ extend meromorphically to the compactification of $M.$
\end{proposition}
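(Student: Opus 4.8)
The plan is to prove that $M$ is parabolic by exhibiting a positive superharmonic (or harmonic) exhaustion-type function, equivalently by showing that the universal cover argument or a direct potential-theoretic criterion applies. The natural strategy: properness of $\Psi$ on $\s^1\times M$ says that for every $R>0$ the set $K_R=\{P\in M : \|{\rm Re}(\vgot F)(P)\|\le R \text{ for some }\vgot\in\s^1\}$ is compact. Equivalently, writing $F=(F_1,F_2)$, the quantity $\rho(P):=\min_{\vgot\in\s^1}\|{\rm Re}(\vgot F)(P)\|$ is a proper nonnegative function on $M$. The key observation is that $\min_{\vgot\in\s^1}\|{\rm Re}(\vgot F)\|^2$ can be computed explicitly: if $F=(F_1,F_2)$ then ${\rm Re}(\vgot F_1)^2+{\rm Re}(\vgot F_2)^2 = \tfrac12(|F_1|^2+|F_2|^2) + \tfrac12{\rm Re}\big(\vgot^2(F_1^2+F_2^2)\big)$, so minimizing over $\vgot\in\s^1$ gives $\rho(P)^2 = \tfrac12\big(|F_1|^2+|F_2|^2\big)(P) - \tfrac12|F_1^2+F_2^2|(P)$. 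Thus $\rho^2 = \tfrac12\big(\|F\|^2 - |\curvo{F,F}|\big)$ where $\curvo{F,F}=F_1^2+F_2^2$.

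Next I would use this formula to build a subharmonic proper function. Note $\|F\|^2=|F_1|^2+|F_2|^2$ is subharmonic (sum of squares of moduli of holomorphic functions, i.e. plurisubharmonic composed with a holomorphic map), and $|\curvo{F,F}|=|F_1^2+F_2^2|$ is subharmonic as the modulus of a holomorphic function. So $\rho^2$ is a difference of subharmonic functions, which is not obviously subharmonic. Instead, the cleaner route is: ${\rm Re}(\vgot F_j)$ is a harmonic function on $M$ for each fixed $\vgot$ and each $j$, hence $u_\vgot:=\|{\rm Re}(\vgot F)\|^2={\rm Re}(\vgot F_1)^2+{\rm Re}(\vgot F_2)^2$ is subharmonic (squares of harmonics, plus sum). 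Then $\rho^2=\inf_{\vgot}u_\vgot$ — an infimum of subharmonic functions, which again need not be subharmonic. The fix: one works with a suitable continuous function that is proper and use the characterization of parabolicity via the nonexistence of a negative nonconstant subharmonic function, or via harmonic measure. Actually the standard and robust approach here is: compose with $\log$. Consider $\log\rho^2$ near infinity; properness makes $\log\rho^2\to+\infty$ as $P\to\infty$ in $M$. If $M$ were hyperbolic it would carry a Green's function / a nonconstant bounded-above subharmonic function, and one derives a contradiction by a maximum-principle argument against a harmonic majorant of $-\log u_\vgot$ on an exhaustion, averaging over $\vgot$. The cleanest version: for each $\vgot$, $-\log u_\vgot = -2\log\|{\rm Re}(\vgot F)\|$ is superharmonic away from the zero set of ${\rm Re}(\vgot F)$, and $\sup_\vgot(-\log u_\vgot) = -\log\rho^2$ is a proper function tending to $-\infty$; a proper superharmonic function bounded above forces parabolicity (this is essentially the Ahlfors/Nevanlinna criterion, cf. the argument used in \cite{al1}). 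I expect the main obstacle to be handling the zero sets of the harmonic functions ${\rm Re}(\vgot F_j)$ where $\log$ blows up, and justifying that the relevant $\sup$/$\inf$ over the compact parameter $\s^1$ preserves the (super/sub)harmonicity or at least the potential-theoretic consequence; one resolves this by noting the zero set is a real-analytic curve (or, if $F$ is constant the statement is trivial) and that superharmonicity is inherited across such thin sets, or by perturbing $F$ by a small translation to move the zeros.

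For the finite-topology consequence: once $M$ is parabolic and has finite topology, it is classical that $M$ is biholomorphic to $\overline{M}\setminus\{p_1,\ldots,p_k\}$ for a compact Riemann surface $\overline{M}$ (parabolicity rules out the possibility of the ideal boundary being a positive-capacity set; combined with finite genus and finitely many ends, each end is a punctured disc). It remains to show $F_1,F_2$ extend meromorphically across each puncture $p_\ell$. For this, work in a punctured-disc coordinate $z$ near $p_\ell$, $0<|z|<1$. Properness of $\Psi$ forces $\|F(z)\|\to\infty$ as $z\to 0$ (otherwise a sequence $z_n\to 0$ with $\|F(z_n)\|$ bounded would, together with a suitable $\vgot_n$ minimizing, give a sequence in $\Psi^{-1}(\text{bounded})$ escaping every compact set). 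Hence neither $F_1$ nor $F_2$ can have an essential singularity together staying finite, but more precisely $\|F\|\to\infty$ means $1/\|F\|^2\to 0$; since $F_1,F_2$ are holomorphic on the punctured disc, by the Casorati–Weierstrass theorem an essential singularity at $0$ of $F_j$ would make $F_j$ come arbitrarily close to every value infinitely often near $0$, in particular near $0$, contradicting $\|F\|\to\infty$ unless the other component compensates — so I would argue: if $0$ is an essential singularity of, say, $F_1$, pick $z_n\to 0$ with $F_1(z_n)\to a\in\c$ finite; then $\|F(z_n)\|\to\infty$ forces $F_2(z_n)\to\infty$, but then consider instead a sequence making $F_2$ bounded (again by Casorati–Weierstrass applied to $F_2$, or to $1/F_2$) to get a contradiction, concluding $0$ is at worst a pole of each $F_j$. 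This step is routine; the only care needed is the interplay between the two components, handled by choosing test sequences along which one component is controlled. Thus $F_1,F_2$ are meromorphic on $\overline{M}$.
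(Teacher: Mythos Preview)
Your potential-theoretic route to parabolicity has a genuine gap. You never succeed in producing a function that is simultaneously (super/sub)harmonic and proper: you correctly note that $\rho^2$ is a difference of subharmonic functions, that $\inf_\vgot u_\vgot$ need not be subharmonic, and that $\sup_\vgot(-\log u_\vgot)$ need not be superharmonic; you then invoke an ``Ahlfors/Nevanlinna criterion'' without resolving any of these obstructions. Moreover, the claim that $-\log u_\vgot=-\log\big(({\rm Re}(\vgot F_1))^2+({\rm Re}(\vgot F_2))^2\big)$ is superharmonic is itself unjustified: the two harmonic functions involved are real parts of \emph{different} holomorphic functions, not a conjugate pair, so $\log u_\vgot$ is not harmonic in general. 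Your argument for meromorphic extension is also incomplete: from properness of $\Psi$ you only extract $\|F(z)\|\to\infty$ at each puncture, but this alone does not rule out an essential singularity of $F_1$ (e.g.\ $F_1=e^{1/z}$, $F_2=1/z$ has $\|F\|\to\infty$ yet $F_1$ is essential); the Casorati--Weierstrass juggling you sketch requires a single sequence along which \emph{both} components stay bounded, which you do not produce.

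The paper's proof bypasses all of this with one observation: for any $P$, choose $\vgot\in\s^1$ with ${\rm Re}(\vgot F_2(P))=0$; then $\|\Psi(\vgot,P)\|=|{\rm Re}(\vgot F_1(P))|\le|F_1(P)|$. Hence if $F_1$ were not proper, a divergent sequence $P_n$ with $F_1(P_n)$ bounded would give $(\vgot_n,P_n)$ in a bounded $\Psi$-preimage, contradicting properness. So $F_1$ (and symmetrically $F_2$) is a proper holomorphic function $M\to\c$, which immediately forces $M$ parabolic and, at each puncture, rules out essential singularities directly. Ironically, your explicit formula already encodes this: from $\rho^2=\tfrac12(|F_1|^2+|F_2|^2)-\tfrac12|F_1^2+F_2^2|$ and the reverse triangle inequality $|F_1^2+F_2^2|\ge\big||F_1|^2-|F_2|^2\big|$ one gets $\rho\le\min(|F_1|,|F_2|)$, so properness of $\rho$ forces each $F_j$ proper --- exactly the paper's conclusion, reached without any potential theory.
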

\begin{proof} To show that $M$ is parabolic, it suffices to check that $F_1:M\to\c$ (and likewise $F_2$) is a proper holomorphic function. Reason by contradiction and take a divergent sequence $\{P_n\}_{n \in \n} \subset M$ such that $\{F_1(P_n)\}_{n \in \n}$ is bounded. For each $n\in \n$ choose $\vgot_n \in \s^1$ such that ${\rm Re}(\vgot_n F_2(P_n))=0.$  Then $\{\Psi(\vgot_n,P_n)\}_{n \in \n}$ is bounded as well, which is absurd.

For the second part of the proposition, assume that $M$ has finite topology. The parabolicity implies that $M=\overline{M}-\{Q_1,\ldots,Q_k\},$ where $\overline{M}$ is a compact Riemann surface  and $Q_1,\ldots,Q_k \in \overline{M}.$  Since $F_1,$ $F_2:M \to \c$ are proper holomorphic functions then they have no essential singularities at the ends, and so, they extend meromorphically to $\overline{M}.$
\end{proof}

\begin{corollary} \label{co:sharp}
Let $M$ be an open Riemann surface of finite topology and let $F:M \to \c^3$ be a null curve. Assume that the map $\Ygot:\s^1\times M \to \r^2,$ $\Ygot (\vgot,P)=F^\vgot(P),$ is proper.

Then $F$ has finite total curvature.  
\end{corollary}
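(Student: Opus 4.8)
The plan is to deduce Corollary \ref{co:sharp} from Proposition \ref{pro:sharp} together with the classical Chern--Osserman theory of complete (or properly immersed) minimal surfaces of finite topology. First I would apply Proposition \ref{pro:sharp} to the holomorphic map $(F_1,F_2):M\to\c^2$: its hypothesis is exactly that $\Ygot$ is proper, so the proposition yields that $M$ is parabolic, that $M=\overline M-\{Q_1,\ldots,Q_k\}$ for a compact Riemann surface $\overline M$, and that $F_1$ and $F_2$ extend meromorphically to $\overline M$. So two of the three coordinate differentials $dF_1$, $dF_2$ of the Weierstrass representation $dF$ extend meromorphically across each puncture $Q_\ell$.

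The key remaining step is to show that $dF_3$ also extends meromorphically to $\overline M$; once all three components of $dF$ are meromorphic $1$-forms on the compact surface $\overline M$, the total curvature of the minimal immersion $\mathrm{Re}\,F$ is, up to sign, $-\int_M K\,dA=\int_M |dg|^2/(1+|g|^2)^2$ where $g$ is the (meromorphic, hence finite-degree) Gauss map expressible through the $dF_j$; finiteness is then the classical computation of Chern--Osserman. To get the extension of $dF_3$, I would use the nullity relation $\curvo{dF,dF}=(dF_1)^2+(dF_2)^2+(dF_3)^2=0$, so $(dF_3)^2=-(dF_1)^2-(dF_2)^2$ is a meromorphic quadratic differential on $\overline M$; thus $dF_3$ extends as a (possibly multivalued) form with at worst a pole, and its square being single-valued meromorphic forces $dF_3$ itself to be meromorphic at each $Q_\ell$ unless the local exponent is a half-integer. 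To rule out the half-integer (branching) case one uses that $F_3$ is globally well defined on $M$ — i.e. $dF_3$ has no real periods, indeed no periods at all since $F$ is a genuine map — and that near a puncture the behavior of $F_1,F_2$ already constrains $F_3$; more directly, $dF_3=\pm\sqrt{-(dF_1^2+dF_2^2)}$ can branch only where $dF_1^2+dF_2^2$ has an odd-order zero, and analyticity of $F$ on $M$ plus properness forces the end behavior to be of finite order, so $dF_3$ has finite order at $Q_\ell$ and hence extends meromorphically.

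With all of $dF_1,dF_2,dF_3$ meromorphic on the compact $\overline M$, I would finish by invoking the standard fact (Osserman, Jorge--Meeks) that a conformal minimal immersion of an open Riemann surface of finite topology whose Weierstrass data extend meromorphically to the conformal compactification has finite total curvature; equivalently, the Gauss map extends to a holomorphic (branched) map $\overline M\to\c\cup\{\infty\}$ of finite degree, and $\int_M|K|\,dA$ equals $4\pi$ times that degree. Since $\mathrm{Re}\,F$ and $F$ share the same Weierstrass data up to the trivial identification, "$F$ has finite total curvature" means precisely this, so the corollary follows.

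The main obstacle I anticipate is the extension of $dF_3$: one must argue carefully that the a priori only-locally-defined square root $\sqrt{-(dF_1^2+dF_2^2)}$ does not pick up a genuine branch point at an end $Q_\ell$, and that it has a pole rather than an essential singularity. The pole (finite order) part should follow from properness exactly as in the proof of Proposition \ref{pro:sharp} — an essential singularity of $F_3$ at an end, by the Casorati--Weierstrass theorem, would let $F_3$, and hence (after choosing $\vgot_n$ adapted to $F_1,F_2$ as there) the full map, fail properness — while the no-branching part is forced because $F_3$ itself (not just $dF_3$) is single-valued on all of $M$, so the local monodromy of $dF_3$ around $Q_\ell$ is trivial.
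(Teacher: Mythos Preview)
Your approach is correct and matches the paper's: apply Proposition~\ref{pro:sharp} to $(F_1,F_2)$, deduce that $dF_1,dF_2$ (hence $dF_3$, via the nullity relation) extend meromorphically to the compactification $\overline M$, and invoke Osserman's classical results. The extension of $dF_3$ is actually simpler than you fear: since $dF_3$ is already a single-valued holomorphic $1$-form on $M$ and $(dF_3)^2=-(dF_1)^2-(dF_2)^2$ is meromorphic at each puncture, $dF_3$ is automatically meromorphic there (a holomorphic function on a punctured disc whose square is meromorphic at the center cannot have an essential singularity, and single-valuedness forces the order of $(dF_3)^2$ to be even), so your separate Casorati--Weierstrass argument for $F_3$ is unnecessary.
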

\begin{proof}
Just write $F=(F_j)_{j=1,2,3},$ take into account that $dF_1$ and $dF_2,$ hence $dF_3,$ extend meromorphically to the natural compactification of $M,$ and Osserman's classical results \cite{Os}.
\end{proof}


\end{document}